\documentclass[12pt]{amsart}

\usepackage[T1]{fontenc}
\usepackage{amsmath,amssymb,amsthm,mathtools}
\usepackage{enumerate}
\usepackage[plainpages=false]{hyperref}

\hypersetup{
  colorlinks=true,
  linkcolor=blue,
  citecolor=blue,
  urlcolor=blue
}

\usepackage[
  a4paper,
  width=6.0in,
  height=8.25in,
  hmarginratio=1:1,
  vmarginratio=1:1
]{geometry}

\numberwithin{equation}{section}

\newtheorem{prop}{Proposition}[section]
\newtheorem{thm}[prop]{Theorem}
\newtheorem{lem}[prop]{Lemma}
\newtheorem{cor}[prop]{Corollary}
\theoremstyle{remark}
\newtheorem{rem}[prop]{Remark}

\DeclareMathOperator{\cof}{cof}
\DeclareMathOperator{\osc}{osc}
\DeclareMathOperator{\dist}{dist}
\DeclareMathOperator{\diam}{diam}
\DeclareMathOperator{\tr}{tr}
\newcommand{\R}{\mathbb R}
\newcommand{\Lg}{L_g}
\newcommand{\Lgs}{L_g^*}
\newcommand{\Div}{\operatorname{div}}

\allowdisplaybreaks
\title[Interior $C^{1,\alpha}$ estimates for the 2D linearized Monge--Amp\`ere equation]{Interior $C^{1,\alpha}$ estimates for the linearized Monge--Amp\`ere equation in two dimensions}

\author{Ling Wang}
\address{Department of Decision Sciences and BIDSA, Bocconi University, Milano, Italy}
\email{ling.wang@unibocconi.it}

\author{Bin Zhou}
\address{School of Mathematical Sciences, Peking University, Beijing 100871, China}
\email{bzhou@pku.edu.cn}

\subjclass[2020]{Primary 35B65, 35J70; Secondary 35B45, 35B53, 35J96}
\keywords{Linearized Monge--Amp\`ere equation, interior gradient estimates, partial Legendre transform, adjoint equation, Liouville theorem}

\begin{document}

\begin{abstract}
We prove an interior $C^{1,\alpha}$ estimate for solutions of the
homogeneous linearized Monge--Amp\`ere equation in dimension two under the
assumption
\[
0<\lambda\leq \det D^2\varphi\leq\Lambda<+\infty.
\]
No continuity assumption on the Monge--Amp\`ere density is required. 
Our result is an affine-invariant analogue of the classical
Morrey--Nirenberg $C^{1,\alpha}$ estimate in two dimensions. The core of the proof is the partial Legendre transform. After the transform, the first derivatives of
the solution are quotients of adjoint solutions for a uniformly elliptic
non-divergence form equation. Bauman's Harnack inequality gives the H\"older
control of the quotient, while the Jacobian identity of the partial Legendre
transform and a Caccioppoli estimate give its local boundedness. As an
application, we prove a Liouville theorem for entire solutions with at most
linear growth.
\end{abstract}
\maketitle

\section{Introduction}

This paper is concerned with the interior $C^{1,\alpha}$ estimate for solutions of the
homogeneous linearized Monge--Amp\`ere equation in dimension two. Let
$\Omega\subset\R^2$ be a bounded convex domain and $\varphi$ be a smooth and
strictly convex function with
\begin{equation}\label{eq:MA-basic}
0<\lambda\leq \det D^2\varphi\leq \Lambda<+\infty
\qquad\text{in }\Omega.
\end{equation}
The associated linearized Monge--Amp\`ere operator is
\[
L_\varphi u:=\Phi^{ij}D_{ij}u,
\qquad
\Phi:=\cof(D^2\varphi)
       =(\det D^2\varphi)(D^2\varphi)^{-1}.
\]
We use the Einstein summation convention throughout the paper. 
We mainly focus on the homogeneous equation
\begin{equation}\label{eq:Lphi-hom}
L_\varphi u=0.
\end{equation}

The difficulty in studying linearized Monge--Amp\`ere equations is that \eqref{eq:MA-basic} usually does not give a uniform ellipticity in
fixed Euclidean balls. The natural geometry is instead given by the
sections of $\varphi$,
\[
S_\varphi(x_0,h)
:=\bigl\{x\in\Omega:\
\varphi(x)<\varphi(x_0)+\nabla\varphi(x_0)\cdot(x-x_0)+h
\bigr\}.
\]
Caffarelli and Guti\'errez \cite{CG} proved the Harnack inequality and interior
H\"older estimate for solutions of the homogeneous linearized Monge--Amp\`ere
equation under a $\mathcal{A}_\infty$ condition. In particular, this condition is guaranteed if we have \eqref{eq:MA-basic}.
Subsequent developments include interior $W^{2,p}$ and second derivative
estimates, boundary regularity, global $W^{1,p}$ and $W^{2,p}$ estimates, and
applications to semigeostrophic equations. See
\cite{GN2,GuTo,LN1,LN2,LS,LeSem} and the references therein.
Linearized Monge--Amp\`ere operators also arise in affine maximal surface and related fourth-order equations. See, for example, \cite{TrWa,TW}. Linearized
Monge--Amp\`ere equations with drifts, together with applications to singular
Abreu equations, were studied in \cite{KLWZ}. For the geometry of
sections and the interior regularity theory of the Monge--Amp\`ere equation,
see \cite{CaLoc,Ca,CaGuRA,GH,GuBook,FiBook,LeBook}.

For gradient estimates, the known results require additional information on
the Monge--Amp\`ere density. Guti\'errez and Nguyen \cite{GN1} proved
interior H\"older estimates for the gradient when $\det D^2\varphi$ is
continuous. Tang and Zhang \cite{TZ} obtained related $C^{1,\alpha}$
estimates under a VMO condition on the density. More recently,
Cui \cite{Cui26} established gradient potential estimates and derived
criteria for continuity and local $C^{1,\gamma}$ regularity of the gradient.
Thus, the known gradient estimates require assumptions on the
Monge--Amp\`ere density beyond the two-sided bound \eqref{eq:MA-basic}.

The situation in dimension two is different. The classical results of
Morrey \cite{Mor} and Nirenberg \cite{Ni} give interior $C^{1,\alpha}$
estimates for uniformly elliptic equations in two variables without the continuity assumption on the coefficients. The same first-order estimate does not hold, in general, in higher dimensions \cite{Saf}. Although $L_\varphi$ is not uniformly
elliptic in Euclidean coordinates, it is affine invariant and the sections of
$\varphi$ provide the natural replacement for Euclidean balls. We therefore ask whether an affine-invariant version of the planar
Morrey--Nirenberg estimate holds for the linearized Monge--Amp\`ere equation.

In the positive direction, Savin \cite{Sa} provided further evidence. In particular, he proved that every entire globally Lipschitz solution of
\eqref{eq:Lphi-hom} in $\R^2$ under the assumption \eqref{eq:MA-basic} is affine. He also observed that this Liouville
theorem suggests an interior $C^{1,\alpha}$ estimate in dimension two and
indicated that this problem would be addressed in a subsequent work. To the best of our knowledge, however, this estimate has not yet appeared in the literature. In the present paper, we prove such an estimate assuming only \eqref{eq:MA-basic}.

After subtracting the tangent plane at the center of a section and applying the
standard affine normalization, we assume
$\varphi(0)=0$,
$\nabla\varphi(0)=0$,
and write $S_\theta:=S_\varphi(0,\theta)$ for $0<\theta\leq 1$. We use the standard
notion of a normalized section.

\begin{thm}[Interior $C^{1,\alpha}$ estimate]\label{thm:main}
Let $S_1\Subset\Omega$ be a normalized section. Assume that $\varphi$ is
smooth and strictly convex in a neighborhood of $\overline{S_1}$ and
\[
0<\lambda\leq \det D^2\varphi\leq\Lambda<+\infty
\qquad\text{in }S_1.
\]
Suppose $u\in C^\infty(S_1)\cap L^\infty(S_1)$ is a solution to \eqref{eq:Lphi-hom} in $S_1$. Then for every $\theta\in(0,1)$, there exist $\alpha\in(0,1)$ and
$C>0$, depending only on $\lambda,\Lambda$ and $\theta$, such that
\begin{equation}\label{eq:main-estimate-intro}
\|\nabla u\|_{C^\alpha(S_\theta)}
\leq C\,\osc_{S_1}u.
\end{equation}
\end{thm}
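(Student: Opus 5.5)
The plan follows the route announced in the abstract: pass to the partial Legendre transform, identify the first derivatives of $u$ as quotients of adjoint solutions of a uniformly elliptic equation, control those quotients by Bauman's Harnack inequality, and transfer the result back to sections.

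\textbf{Step 1: reduce to a normalized section.} By affine invariance of $L_\varphi$ and the engulfing and size properties of sections (Caffarelli), $S_\theta$ can be covered by finitely many sections $S_\varphi(y,c h)$ with $S_\varphi(y,h)\Subset S_1$. After an affine normalization, each of these becomes a normalized section, and $\nabla\varphi$ is $C^{1,\beta}$-equivalent (Caffarelli's interior $C^{1,\beta}$ estimate) with controlled Lipschitz ratios. So it suffices to prove a gradient H\"older bound in $S_{1/2}$ with all constants depending only on $\lambda,\Lambda$.

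\textbf{Step 2: partial Legendre transform.} Set $(p,q)=(\varphi_{x_1},x_2)$ and $\varphi^\star(p,q)=x_1\varphi_{x_1}-\varphi$. A direct computation gives $\varphi^\star_{pp}=1/\varphi_{11}$ and $\varphi^\star_{qq}=-\det D^2\varphi/\varphi_{11}$. Hence $\varphi^\star$ solves the uniformly elliptic linear equation $\det D^2\varphi\,\varphi^\star_{pp}+\varphi^\star_{qq}=0$, whose coefficients are bounded between $\lambda$ and $\Lambda$. Under this map, $L_\varphi u=0$ becomes a uniformly elliptic equation for $u$ in the variables $(p,q)$. I would check that $\partial_q u$ and $\partial_p u$ (equivalently $u_{x_2}$, and $u_{x_1}$ times $1/\varphi_{11}$) satisfy equations of the form $a^{ij}D_{ij}w=0$ or its adjoint. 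Concretely, $u_{x_1}=v/\omega$ and $u_{x_2}=\tilde v/\omega$, where $\omega=\varphi^\star_{pp}>0$ and $v,\tilde v$ are solutions of the adjoint equation $D_{ij}(a^{ij}\,\cdot)=0$ with $a=\mathrm{diag}(\det D^2\varphi,1)$. The positive adjoint solution $\omega$ plays the role of a Jacobian.

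\textbf{Step 3: H\"older continuity of the quotients.} Bauman's boundary Harnack and comparison theorem for positive adjoint solutions of uniformly elliptic nondivergence operators says the following: if $v$ is an adjoint solution and $\omega>0$ is one as well, then $v/\omega$ is H\"older continuous with estimate $\osc_{B_r}(v/\omega)\le C r^\alpha\sup_{B_1}|v/\omega|$. To apply it, I need the image of $S_{1/2}$ under the partial Legendre map to contain a Euclidean ball of comparable size inside the image of $S_1$. This follows from the normalization and the strict convexity quantitative estimates, since the map is bi-H\"older.

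\textbf{Step 4 (main obstacle): local boundedness of $v/\omega$, i.e.\ of $\nabla u$ in terms of $\osc u$.} I would use the Jacobian identity $dp\,dq=\varphi_{11}\,dx$ to convert $\int|\nabla u|^2$-type quantities. The divergence structure $\Phi^{ij}D_{ij}u=D_i(\Phi^{ij}D_ju)$ gives a Caccioppoli estimate $\int_{S_{3/4}}\Phi^{ij}u_iu_j\le C\osc^2 u$. Transformed, this yields an $L^2$ bound on $v/\omega$ weighted by $\omega$ in $(p,q)$-space. The Harnack inequality for $\omega$ together with Bauman's doubling of the adjoint measure $\omega\,dp\,dq$ then upgrades this to an $L^\infty$ bound via a mean-value inequality for quotients. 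Finally, $v/\omega$ is H\"older in $(p,q)$, and $(p,q)$ is H\"older in $x$ by Caffarelli, so $\nabla u$ is H\"older in $x$ on $S_{1/2}$ with a constant $C\osc_{S_1}u$. Step 1 then gives \eqref{eq:main-estimate-intro}.
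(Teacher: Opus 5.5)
Your architecture matches the paper's: partial Legendre transform, first derivatives as quotients of adjoint solutions, Bauman's Harnack principle for the H\"older part, and the Jacobian identity plus Caccioppoli for boundedness. However, two steps fail as written.

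First, in Step 2 only $u_{x_1}$ is an adjoint quotient for the transform $(\xi,\eta)=(\varphi_{x_1},x_2)$. With $\widetilde u=u\circ\mathcal Q$ one has $\omega\,(u_{x_1}\circ\mathcal Q)=\widetilde u_\xi$, and differentiating $(g\widetilde u_\xi)_\xi+\widetilde u_{\eta\eta}=0$ in $\xi$ gives $\Lgs\widetilde u_\xi=0$. Nothing similar holds for $u_{x_2}$: $\widetilde u_\eta=u_{x_2}-\frac{\varphi_{12}}{\varphi_{11}}u_{x_1}$, and differentiating the equation in $\eta$ produces $g_\eta$. Here is a counterexample to your claim $u_{x_2}=\widetilde v/\omega$ with $\Lgs\widetilde v=0$. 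Take $\varphi=f(x_1)+\tfrac12x_2^2$ with $f''=2+\sin x_1$, so $\det D^2\varphi\in[1,3]$, and take $u=x_1x_2$. Then $L_\varphi u=u_{11}+f''u_{22}=0$ and $g\omega\equiv1$. Since $\omega u_{x_2}=x_1/f''(x_1)$ depends only on $\xi$, we get $\Lgs(\omega u_{x_2})=\partial_{\xi\xi}x_1=-f'''/(f'')^3\neq0$. The remedy is to treat $u_{x_2}$ with the other partial Legendre transform $(x_1,\varphi_{x_2})$, as the paper does.

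Second, Step 4, which is the heart of the proof, does not close. Write $P:=v/\omega=u_{x_1}\circ\mathcal Q$ and $d\mu=\omega\,d\xi\,d\eta$. The energy becomes $\int(g\widetilde u_\xi^2+\widetilde u_\eta^2)\,d\xi\,d\eta$, so what you control is $\int P^2\omega^2\,d\xi\,d\eta=\int P^2\,\omega\,d\mu$. You do not control $\int P^2\,d\mu=\int u_{x_1}^2\,dx$. The extra factor $\omega=1/\varphi_{11}$ has no lower bound, and Caccioppoli cannot give $\nabla u\in L^2(dx)$ because $\Phi$ is not uniformly elliptic. Moreover, Bauman's Harnack inequality concerns nonnegative normalized adjoint solutions, so there is no off-the-shelf mean-value inequality for the sign-changing quotient $P$. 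Doubling of $\mu$ is scale-invariant and cannot by itself give an absolute bound.

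What is missing is the following, which is what the paper does.
\begin{enumerate}[(i)]
\item Cauchy--Schwarz in Lebesgue measure gives $\int_{B_r}|P|\,d\mu=\int_{B_r}|\widetilde u_\xi|\le Cr\osc_{S_1}u$.
\item The Jacobian identity $\int_E\omega\,d\xi\,d\eta=|\mathcal Q(E)|$, together with the $C^\beta$ bound on $\mathcal P$, gives the absolute lower bound $\mu(B_r(z))\ge c_0r^{2/\beta}$. This, not the conversion of energies, is the essential use of the Jacobian.
\item Take a maximum point $y$ of $|P|D^N$ with $N>2/\beta-1$. There the scale-invariant Bauman H\"older estimate forces $|P|\ge\frac12|P(y)|$ on $B_{\tau D(y)}(y)$. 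Comparing (i) and (ii) on this ball yields $|P(y)|\le CD(y)^{1-2/\beta}\osc_{S_1}u$, and the $L^\infty$ bound on $\mathcal P(S_\theta)$ follows from the maximality of $y$.
\end{enumerate}
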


Theorem~\ref{thm:main} may be viewed as an affine-invariant analogue of the
Morrey--Nirenberg estimate. The formulation on normalized sections is
essential. The determinant bound \eqref{eq:MA-basic} does not control the
Euclidean ellipticity ratio of $L_\varphi$, so an estimate with constants
depending only on $\lambda$ and $\Lambda$ cannot in general be stated on fixed
Euclidean balls.

\vskip 8pt

The proof of Theorem \ref{thm:main} is based on the partial Legendre transform. This transform is
classical in the study of Monge--Amp\`ere equations. 
To the best of our knowledge, it was first used for the linearized
Monge--Amp\`ere equation by the first author in \cite{Wa}, in which a new proof of the Caffarelli--Guti\'errez H\"older estimate in dimension two is given. 
Here, we apply the transform to the first derivatives of $u$. Write
\[
\varphi^\star(\xi,\eta)
:=\sup_{x_1}\{x_1\xi-\varphi(x_1,\eta)\},
\qquad
(\xi,\eta)=\mathcal P(x_1,x_2)
:=(\varphi_1(x_1,x_2),x_2),
\]
and let $\mathcal Q=\mathcal P^{-1}$. Set
\[
a:=\varphi^\star_{\xi\xi},
\qquad
g:=\det D^2\varphi\circ \mathcal Q,
\qquad
\widetilde u:=u\circ \mathcal Q.
\]
Then
\begin{equation}\label{eq:transformed-intro}
\varphi^\star_{\eta\eta}+g\varphi^\star_{\xi\xi}=0,
\qquad
(g\widetilde u_\xi)_\xi+\widetilde u_{\eta\eta}=0,
\qquad
\lambda\leq g\leq\Lambda.
\end{equation}
Denote
\[
\Lg:=g\partial_{\xi\xi}+\partial_{\eta\eta},
\qquad
\Lgs m:=\partial_{\xi\xi}(gm)+\partial_{\eta\eta}m.
\]
It is easy to see that $\Lg$ is uniformly elliptic 
and $\Lgs$ is its adjoint operator. If
\[
p:=u_1\circ \mathcal Q=\frac{\widetilde u_\xi}{a},
\]
then differentiating the two equations in \eqref{eq:transformed-intro} gives
\[
L_g^*a=0,
\qquad
L_g^*(ap)=0.
\]
Hence $p$ is a quotient of two solutions of the same adjoint equation.

Bauman's Harnack inequality for normalized adjoint solutions \cite{Ba} gives the
H\"older control of this quotient once $p$ is locally bounded. It remains to obtain a local bound for $p$. The Jacobian formula
\[
a(\xi,\eta)\,\mathrm d\xi\mathrm d\eta=\mathrm dx
\]
and the interior $C^{1,\beta}$ estimate for $\varphi$ give a lower bound for
the mass of $a$ on transformed balls. Since
$\widetilde u_\xi=ap$, a Caccioppoli estimate for $\widetilde u$ then gives a
local bound for $p$. Bauman's Harnack inequality applied to the adjoint
quotient yields the H\"older estimate for $p$. The estimate for $u_1$ follows
by composing with $\mathcal P$, and the argument for $u_2$ is the same after
interchanging the two variables.

\vskip 8pt

As an application, we obtain a Liouville theorem in which the global Lipschitz
assumption in \cite[Theorem~1.1]{Sa} is replaced by at most linear growth.

\begin{thm}[Liouville theorem]\label{thm:Liouville}
Let $\varphi:\R^2\to\R$ be smooth and strictly convex, and assume
\[
0<\lambda\leq \det D^2\varphi\leq\Lambda<+\infty
\qquad\text{in }\R^2.
\]
Let $u\in C^\infty(\R^2)$ be a solution to \eqref{eq:Lphi-hom}
in $\R^2$. If
\[
|u(x)|\leq C_1(1+|x|)
\qquad\text{for all }x\in\R^2,
\]
then $u$ is an affine function.
\end{thm}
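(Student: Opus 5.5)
We will derive Theorem~\ref{thm:Liouville} from Theorem~\ref{thm:main} by rescaling large sections and applying the estimate with constants independent of the scale. Fix $x_0\in\R^2$. By subtracting the tangent plane of $\varphi$ at $x_0$ we may assume $\varphi(x_0)=0$ and $\nabla\varphi(x_0)=0$. Since $\det D^2\varphi\geq\lambda>0$ on all of $\R^2$, the sections $S_\varphi(x_0,h)$ are bounded for every $h$ and exhaust $\R^2$ as $h\to\infty$; this is the standard consequence of Caffarelli's strict convexity for entire solutions with two-sided determinant bounds. For each $h$ we choose the John-type affine map $T_h x=A_hx+b_h$ that normalizes $S_h:=S_\varphi(x_0,h)$, that is, $B_{c}\subset T_h(S_h)\subset B_{C}$ with universal constants. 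We then set
\[
\varphi_h(y):=\frac{\varphi(T_h^{-1}y)}{h},\qquad u_h(y):=u(T_h^{-1}y).
\]
Since $\det A_h^2\sim h^{2}$ in dimension two (volume of $S_h$ comparable to $h$), $\varphi_h$ has determinant between constants depending on $\lambda,\Lambda$, its section at height $1$ is normalized, and $L_{\varphi_h}u_h=0$ by affine invariance of $L_\varphi$. The same holds on the section of height $2$, which lies compactly inside $\R^2$, so the hypotheses of Theorem~\ref{thm:main} hold.

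Applying \eqref{eq:main-estimate-intro} with $\theta=1/2$ gives
\[
[\nabla u_h]_{C^\alpha(S_{1/2}^{h})}\leq C\osc_{S_1^h}u_h=C\osc_{S_h}u .
\]
The linear growth bound yields $\osc_{S_h}u\leq C(1+\diam S_h)$. Transforming back with $\nabla u_h=A_h^{-T}\nabla u\circ T_h^{-1}$, we obtain for $x,y\in S_\varphi(x_0,h/2)$
\[
|A_h^{-T}(\nabla u(x)-\nabla u(y))|\leq C(1+\diam S_h)\,|A_h(x-y)|^{\alpha}.
\]
The key geometric input is the engulfing and doubling structure of sections of global solutions. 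It gives $\|A_h^{-1}\|\lesssim \diam S_h$, since $S_h$ contains a ball of radius $\sim 1/\|A_h\|$ and is contained in an ellipse of axes $\sim 1/\sigma_{\min}(A_h)$. The estimate is applied to $x,y$ fixed in a compact set. We need the right-hand side, after multiplying by $\|A_h^{-T}\|^{-1}$ appropriately, to tend to zero as $h\to\infty$. Concretely, with $\mu_1\le\mu_2$ the semi-axes of the ellipse comparable to $S_h$ (so $\mu_1\mu_2\sim h$), we get
\[
|\nabla u(x)-\nabla u(y)|\lesssim \mu_2\cdot \mu_2\cdot\Bigl(\frac{|x-y|}{\mu_1}\Bigr)^{\alpha}.
\]

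The main obstacle is that this naive bound need not decay, because sections may be very eccentric: $\mu_2$ can grow while $\mu_1$ grows slowly. We would first try to use the quantitative growth of sections for entire solutions. By Caffarelli's results, $\mu_1\gtrsim h^{\epsilon}$ and $\mu_2\lesssim h^{1-\epsilon}$ for some $\epsilon=\epsilon(\lambda,\Lambda)>0$. This improves the bound, though not enough by itself. A cleaner route is a two-step argument.

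\textbf{Step 1.} Show that $\nabla u$ is bounded. Apply the normalized estimate to get $\|\nabla u_h\|_{L^\infty}\leq C\osc u_h$. Linear growth gives $\osc_{S_h}u\lesssim \mu_2$, while $|\nabla u|\le \|A_h^{T}\|\,\|\nabla u_h\|\lesssim \mu_1^{-1}\mu_2$. To overcome the eccentricity, compare $u$ with its values along directions: in the normalized picture, $\osc u_h$ is controlled by the growth of $u$ in the long direction only. If $\nabla u$ remains unbounded, we then iterate a Campanato-type improvement of flatness across dyadic heights using the $C^{1,\alpha}$ estimate in each normalized section, as in Savin's blow-down.

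\textbf{Step 2.} Once $u$ is globally Lipschitz, invoke Savin's Liouville theorem \cite[Theorem~1.1]{Sa} to conclude that $u$ is affine.

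Alternatively, we could avoid Step~1 by directly running the iteration $\osc_{S_{h}}(u-\ell_h)\le C\theta^{1+\alpha}\osc_{S_{h/\theta}}(u-\ell_{h/\theta})$. This inequality comes from Theorem~\ref{thm:main} applied to $u-\ell$ with affine $\ell$. Letting the outer height tend to infinity under linear growth forces $u-\ell_h$ to vanish. The delicate point in either route is reconciling Euclidean linear growth with the eccentric section geometry, and we expect this to require the quantitative estimates $\mu_1\gtrsim h^\epsilon$ and $\mu_2\lesssim h^{1-\epsilon}$.
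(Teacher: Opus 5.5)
You have a genuine gap: the proposal sets up the right estimate but never closes the argument.

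\textbf{What works.} Your rescaling step is the same as the paper's. For $x,y\in S_\varphi(x_0,h/2)$ it correctly gives
\[
|A_h^{-T}(\nabla u(x)-\nabla u(y))|\le C\,\osc_{S_h}u\;|A_h(x-y)|^{\alpha},\qquad \osc_{S_h}u\le C\bigl(1+\|A_h^{-1}\|\bigr).
\]

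\textbf{Where it breaks.} Everything after this display is missing. You try to control the full difference $\nabla u(x)-\nabla u(y)$. That costs a factor $\|A_h\|\sim\mu_1^{-1}$, so your displayed bound should read $\mu_1^{-1}\mu_2(|x-y|/\mu_1)^{\alpha}$ rather than $\mu_2\cdot\mu_2(\cdots)$. As you note, this bound is defeated by the eccentricity $\mu_2/\mu_1$.

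Neither fallback is an actual argument:
\begin{enumerate}[(i)]
\item Step~1, a global Lipschitz bound via an unspecified ``Campanato-type improvement of flatness as in Savin's blow-down'', is never carried out.
\item The alternative iteration is not what Theorem~\ref{thm:main} gives. The decay factor it provides is governed by the diameter of a normalized section of height $\theta$. In general that diameter is only $O(\theta^{\kappa})$ for some small $\kappa=\kappa(\lambda,\Lambda)\le 1/2$. Even for $\varphi=|x|^2/2$ the factor is $\theta^{(1+\alpha)/2}$, not $\theta^{1+\alpha}$. Balancing this against linear growth therefore runs into exactly the eccentricity problem you leave open.
\end{enumerate}
As you observe yourself, the rates $\mu_1\gtrsim h^{\epsilon}$, $\mu_2\lesssim h^{1-\epsilon}$ do not fix this.

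\textbf{The missing idea.} Give up on the full gradient and look only along the long axis of $S_h$. Let $e_h$ be a unit right singular vector of $A_h$ for its smallest singular value, so $|A_he_h|=\|A_h^{-1}\|^{-1}$. Since $(\nabla u(x)-\nabla u(y))\cdot e_h=\bigl(A_h^{-T}(\nabla u(x)-\nabla u(y))\bigr)\cdot(A_he_h)$, you get
\[
\bigl|(\nabla u(x)-\nabla u(y))\cdot e_h\bigr|\le C\,\frac{\osc_{S_h}u}{\|A_h^{-1}\|}\,\|A_h\|^{\alpha}\,|x-y|^{\alpha}.
\]
The factor $\|A_h^{-1}\|^{-1}$ exactly compensates the linear growth of $u$ along the long axis, so the quotient stays bounded. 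Meanwhile $\|A_h\|\to0$ because the sections exhaust $\R^2$. Taking $h_k\to\infty$ with $e_{h_k}\to e$ gives $D^2u\,e\equiv0$.

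\textbf{The two-dimensional finish.} Since $D^2u$ is symmetric and $2\times2$, $D^2u=\rho\,e^\perp\otimes e^\perp$ for a scalar $\rho$. Then $0=\tr(\Phi D^2u)=\rho\,(e^\perp)^T\Phi e^\perp$ with $\Phi$ positive definite, so $\rho\equiv0$ and $u$ is affine. This route needs no global Lipschitz bound, no appeal to Savin's theorem, and no quantitative growth rates for $\mu_1,\mu_2$.
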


To see the role of Theorem~\ref{thm:main}, normalize the large sections
$S_h=S_\varphi(0,h)$ by affine maps with linear parts $A_h$. The normalized
estimate controls $A_h^{-T}\nabla u$ on a fixed section. Instead of
estimating the full gradient, we project the gradient difference onto
a right singular vector $e_h$ associated with the smallest singular value of $A_h$. 
The resulting factor
$\|A_h^{-1}\|^{-1}$ compensates for the linear growth of $u$ along the long
axis of $S_h$. Passing to a sequence $h\to\infty$ gives a fixed direction $e$
with $D^2u\,e=0$ in $\R^2$. Since the dimension is two, the equation then
implies $D^2u=0$.

We work throughout with smooth potentials and smooth solutions, so all
changes of variables and differentiations are classical. The estimates are
\textit{a priori}. In particular, the constants do not depend on higher
derivatives of $\varphi$ or $u$.

The rest of the paper is organized as follows. In Section~\ref{sec:prelim}, we
recall the properties of normalized sections used in the proof. In
Section~\ref{sec:PL-quotient}, we derive the equations under the partial
Legendre transform and the estimate for quotients of adjoint solutions. In
Section~\ref{sec:bound-main}, we prove the local boundedness of the quotient
and Theorem~\ref{thm:main}. Theorem~\ref{thm:Liouville} is proved in
Section~\ref{sec:liouville}.

\medskip
\noindent\textbf{Acknowledgments.}
The authors would like to thank Prof.~Ovidiu Savin for his work \cite{Sa},
which motivated the problem considered in this paper. The first author is also
grateful to his postdoctoral mentor, Prof.~Antonio De Rosa, for generous
support and encouragement.

The first author was funded by the European Union through the European Research
Council (ERC), under the Starting Grant ``ANGEVA'' (grant agreement No.
101076411). Views and opinions expressed are, however, those of the authors
only and do not necessarily reflect those of the European Union or the
European Research Council. Neither the European Union nor the granting
authority can be held responsible for them.  The second author 
is partially supported by  National Key R$\&$D Program of China 2023YFA009900 and NSFC  Grant 12271008.

\medskip
\noindent\textbf{AI Disclosure.} 
During the preparation of this manuscript, the authors used ChatGPT-5.6 Sol Plus for language polishing and to improve the readability of drafts written by the authors, particularly the abstract and introduction. The tool was also used in discussions concerning the organization and presentation of parts of the proof of Theorem \ref{thm:main}, including the presentation of the adjoint quotient arising after the partial Legendre transform, and to suggest potentially relevant references. The mathematical problem, the use of the partial Legendre transform, and the overall proof strategy were developed by the authors. All mathematical arguments, results, and cited references were independently checked by the authors, who take full responsibility for the content of the paper.

\section{Preliminaries}\label{sec:prelim}

Throughout the paper, $C$ and $c$ denote positive constants which may change
from line to line. Their dependence will be indicated when necessary. For a
bounded set $E$, we write
\[
\osc_E u:=\sup_E u-\inf_E u,
\qquad
\|f\|_{C^\alpha(E)}:=\|f\|_{L^\infty(E)}+[f]_{C^\alpha(E)}.
\]
In Sections~\ref{sec:prelim}--\ref{sec:bound-main}, $S_1=S_\varphi(0,1)$ is
a normalized section as in the introduction and
$S_\theta=S_\varphi(0,\theta)$. Unless otherwise stated, we work under the
hypotheses of Theorem~\ref{thm:main}.

We first recall the interior $C^{1,\beta}$ estimate and strict convexity of
solutions to the Monge--Amp\`ere equation. The strict convexity statement is
specific to dimension two.

\begin{lem}\label{lem:Caffarelli-C1beta}
Let $S_1$ be normalized and let $\varphi$ be convex in $S_1$ with
\[
\lambda\leq \det D^2\varphi\leq\Lambda
\]
in the Alexandrov sense. For every $\theta\in(0,1)$ there exist
$\beta\in(0,1)$ and $C<\infty$, depending only on
$\lambda,\Lambda$ and $\theta$, such that
\[
\|\nabla\varphi\|_{C^\beta(S_\theta)}\leq C.
\]
Moreover, $\overline{S_\theta}\Subset S_1$ quantitatively, and $\varphi$ is
strictly convex in the interior of $S_1$.
\end{lem}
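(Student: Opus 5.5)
This lemma collects classical facts of Caffarelli's regularity theory for the Monge--Amp\`ere equation. I would prove it by quoting these results in the normalized setting rather than reproving them. Throughout, I use that $S_1$ being normalized means $B_{c_0}\subset S_1\subset B_{C_0}$ after an affine change of coordinates. I also use that $\varphi=1$ on $\partial S_1$, since $\varphi(0)=0$ and $\nabla\varphi(0)=0$. The constants $c_0,C_0$ are universal.

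\textbf{Step 1: strict convexity.} I would first establish strict convexity in the interior of $S_1$, since everything else rests on it. In dimension two this is Alexandrov's theorem: a convex function with $\det D^2\varphi\geq\lambda>0$ in the Alexandrov sense on a planar domain cannot contain a line segment in its graph. In two dimensions there is no Pogorelov-type counterexample. The alternative is Caffarelli's localization theorem: any segment in the graph must extend to the boundary of $S_1$. This is impossible because $\varphi<1$ in $S_1$ and $\varphi=1$ on $\partial S_1$, so the graph contains no segment reaching the boundary. Either argument works here. I expect this step to be the main conceptual point, since it is the only place where dimension two is genuinely used. The dimension-free argument via localization plus the boundary values is what I would write out.

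\textbf{Step 2: quantitative interior sections.} Next I would show that $\overline{S_\theta}\Subset S_1$ quantitatively, meaning $\dist(S_\theta,\partial S_1)\geq c(\lambda,\Lambda,\theta)>0$. The tool is the Alexandrov maximum principle applied to $\varphi-1$ on the normalized convex set $S_1$:
\[
|\varphi(x)-1|^2\leq C\,\dist(x,\partial S_1)\,\mathrm{diam}(S_1)\,|\partial\varphi(S_1)| .
\]
The right-hand side is bounded by $C\Lambda\,\dist(x,\partial S_1)$. If $x\in S_\theta$, then $1-\varphi(x)\geq 1-\theta$, which gives $\dist(x,\partial S_1)\geq c(1-\theta)^2$.

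\textbf{Step 3: $C^{1,\beta}$ estimate.} Finally, I would invoke Caffarelli's interior $C^{1,\beta}$ estimate for strictly convex solutions with $\lambda\leq\det D^2\varphi\leq\Lambda$. The structure of that argument is as follows. Strict convexity and compactness give the engulfing property and uniform decay of sections: for $x_0\in S_\theta$, the section $S_\varphi(x_0,h)$ is compactly contained in $S_1$ for $h\leq h_0(\lambda,\Lambda,\theta)$, and $S_\varphi(x_0,h/2)$ is comparable to a dilation of $S_\varphi(x_0,h)$ by a factor at most $1-\delta$. Iterating this yields $S_\varphi(x_0,h)\subset B_{Ch^{\sigma}}(x_0)$ and $S_\varphi(x_0,h)\supset B_{ch^{1-\sigma'}}(x_0)$ type inclusions. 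Together these imply $|\varphi(x)-\ell_{x_0}(x)|\leq C|x-x_0|^{1+\beta}$, where $\ell_{x_0}$ is the supporting plane at $x_0$. Uniformity in $x_0$ then gives the $C^\beta$ bound for $\nabla\varphi$ on $S_\theta$. The gradient is bounded because $S_\theta\Subset S_1\subset B_{C_0}$ and $\varphi$ is bounded by $1$ there. Since the compactness argument only uses normalization and $\lambda,\Lambda$, all constants depend only on $\lambda,\Lambda,\theta$. I would cite \cite{Ca,CaLoc,GuBook,FiBook} for these steps rather than reproduce them.
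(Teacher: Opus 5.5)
Your proposal is correct and follows the paper's route. The paper likewise just cites Caffarelli's interior theory for the $C^{1,\beta}$ bound and the quantitative inclusion $\overline{S_\theta}\Subset S_1$, and Heinz and Caffarelli for strict convexity, so your Alexandrov-maximum-principle bound $\dist(S_\theta,\partial S_1)\geq c(1-\theta)^2$ and your localization argument with $\varphi\equiv1$ on $\partial S_1$ (which is in fact dimension-free) are valid elaborations of those citations. One small caveat on your Step 3 sketch: iterating $S_\varphi(x_0,h/2)\subset x_0+(1-\delta)(S_\varphi(x_0,h)-x_0)$ alone gives only the outer inclusion $S_\varphi(x_0,h)\subset B_{Ch^\sigma}(x_0)$, and the inner ball that yields $C^{1,\beta}$ needs an extra input (in the plane, $|S_\varphi(x_0,h)|\simeq h$ plus the balancing of $x_0$ in its section gives $S_\varphi(x_0,h)\supset B_{ch^{1-\sigma}}(x_0)$), but since you defer to the literature there this is not a gap.
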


\begin{proof}
The $C^{1,\beta}$ estimate and the quantitative compact inclusion of lower
sections follow from Caffarelli's interior theory. See
\cite{Ca,CaLoc,GuBook,FiBook,LeBook}. In dimension two, interior strict
convexity is classical. See \cite{He,CaLoc}.
\end{proof}

For the partial Legendre transform in the $x_1$-variable, let
\[
\mathcal P(x_1,x_2):=(\varphi_1(x_1,x_2),x_2).
\]
Since $\varphi_{11}>0$, the map $\mathcal P$ is locally a diffeomorphism. For
fixed $x_2$, the function $x_1\mapsto\varphi_1(x_1,x_2)$ is strictly
increasing. It follows that $\mathcal P$ is one-to-one on a section and hence
is a diffeomorphism onto its image.

\begin{lem}\label{lem:P-interior-control}
Let $0<\theta<\theta_1<1$. Denote $Q_h:=\mathcal P(S_h)$ for $h\leq 1$.
There exist $d_0>0$ and $D_0<\infty$, depending only on
$\lambda,\Lambda,\theta$ and $\theta_1$, such that
\begin{equation}\label{eq:P-distance-diam}
\dist(Q_\theta,\partial Q_{\theta_1})\geq d_0,
\qquad
\diam Q_\theta\leq D_0.
\end{equation}
The same conclusion holds for the map
$(x_1,x_2)\mapsto(x_1,\varphi_2(x_1,x_2))$.
\end{lem}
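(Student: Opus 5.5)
The diameter bound is the easy half. On $S_\theta\subset S_1$ we have $|x|\le C$ because $S_1$ is normalized. By Lemma~\ref{lem:Caffarelli-C1beta} applied with $\theta_1$ in place of $\theta$, $|\nabla\varphi|\le C$ on $S_{\theta_1}\supset S_\theta$. Hence $|\mathcal P(x)|\le |\varphi_1(x)|+|x_2|\le C$ on $S_\theta$, which gives $\diam Q_\theta\le D_0$.

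For the distance bound, take $y\in Q_\theta$ and $z\in\partial Q_{\theta_1}$. Since $\mathcal P$ is a diffeomorphism of $S_{\theta_1}$ onto its image, $\partial Q_{\theta_1}=\mathcal P(\partial S_{\theta_1})$, so $y=\mathcal P(x)$ and $z=\mathcal P(w)$ with $x\in S_\theta$ and $w\in\partial S_{\theta_1}$. I will prove a lower bound $|\mathcal P(x)-\mathcal P(w)|\ge d_0$ for such pairs. Set $\delta:=|x_2-w_2|$. If $\delta\ge d_0$ we are done, because the second coordinate of $\mathcal P$ is $x_2$. So assume $\delta<d_0$; the task is to show that $|\varphi_1(x_1,x_2)-\varphi_1(w_1,w_2)|$ is bounded below.

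The plan is to move along the horizontal line through $x$. Let $w'=(w_1',x_2)$ be the point of $\partial S_{\theta_1}$ on the line $\{x_2\}\times\R$ on the same side of $x$ as $w$ (or the nearest such point). Three estimates are needed.

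First, the quantitative inclusion in Lemma~\ref{lem:Caffarelli-C1beta} gives $\dist(S_\theta,\partial S_{\theta_1})\ge c_0$. Thus $|x_1-w_1'|\ge c_0$.

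Second, monotonicity of $\varphi_1$ in $x_1$ gives a gap. Along the segment from $x$ to $w'$, the function $\ell(t)=\varphi(x+te_1)-\varphi(x)-t\varphi_1(x)$ is convex. It satisfies $\ell\ge0$, and its value at the endpoint is at least $\varphi(w')-\varphi(x)-\varphi_1(x)(w_1'-x_1)\ge(\theta_1-\theta)-C|\varphi_1(x)-\ldots|$. To avoid circularity, I will instead use the convexity inequality
\[
(\varphi_1(w')-\varphi_1(x))(w_1'-x_1)\ge \varphi(w')-\varphi(x)-\varphi_1(x)(w_1'-x_1).
\]
I must show the right-hand side is at least $c>0$. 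This is a uniform strict convexity modulus on $S_{\theta_1}$. It follows by compactness from the strict convexity in Lemma~\ref{lem:Caffarelli-C1beta}, uniform over the normalized family: a sequence of normalized solutions with degenerating modulus would converge to a limit solution that fails to be strictly convex, contradicting the lemma. Combined with $|w_1'-x_1|\le C$, this yields $|\varphi_1(w')-\varphi_1(x)|\ge c$.

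Third, I pass from $w'$ to $w$ using $\delta$ small. By the $C^\beta$ bound on $\nabla\varphi$, it suffices to have $|w-w'|\le C\delta^{\gamma}$ for some $\gamma>0$. This Hölder continuity of the boundary $\partial S_{\theta_1}$ in the $x_2$-direction near points where the horizontal line meets it is what I expect to be the main obstacle, since the section may have nearly horizontal boundary pieces. The cleaner route I will try first avoids $w'$ altogether. Compare $w$ with the point $\tilde x=(x_1,w_2)$, which satisfies $|\tilde x-x|=\delta$, so $\tilde x\in S_{(\theta+\theta_1)/2}$ for small $\delta$ by continuity of $\varphi$ (since $|\nabla\varphi|\le C$). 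Then apply the strict-convexity gap along the horizontal segment from $\tilde x$ to $w$, which lies in $\overline{S_{\theta_1}}$ with $\varphi(w)-\varphi(\tilde x)\ge(\theta_1-\theta)/2$. Together with $|\varphi_1(\tilde x)-\varphi_1(x)|\le C\delta^\beta$, this gives
\[
|\mathcal P(x)-\mathcal P(w)|\ge c-C\delta^\beta
\]
when $\delta<d_0$, with $d_0$ chosen small. That completes the distance bound. The statement for $(x_1,\varphi_2)$ follows by exchanging the roles of the variables.
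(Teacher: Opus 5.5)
Your argument is correct. Its final form, the ``cleaner route'', is not the paper's main proof, but it is essentially the alternative proof the paper gives in the Remark after the lemma.

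The main proof applies compactness directly to the conclusion. It assumes points $x^{(k)}\in S^{(k)}_\theta$ and $y^{(k)}\in\partial S^{(k)}_{\theta_1}$ with $|\mathcal P_k(x^{(k)})-\mathcal P_k(y^{(k)})|\to0$, and passes to a $C^1$ limit $\varphi_\infty$. This yields two distinct points on a horizontal line with equal $\partial_1\varphi_\infty$, so $\varphi_\infty$ is affine on a segment, contradicting planar strict convexity.

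You instead use compactness once, to obtain a uniform modulus of strict convexity on $\overline{S_{\theta_1}}$, and then argue directly. Your route differs from the Remark in one respect. You shift the interior point vertically, $\tilde x=(x_1,w_2)$, whereas the paper shifts the boundary point onto the line through $x$, via $\bar y=(y_1,x_2)$. Your choice keeps $\tilde x\in S_{(\theta+\theta_1)/2}$ and the horizontal segment inside $\overline{S_{\theta_1}}$ by convexity. So you need neither the enlarged section $S_{\theta_2}$ nor the $\delta_0$-neighborhood of $\overline{S_{\theta_1}}$ that the Remark uses. You were also right that the $w'$ route is problematic where $\partial S_{\theta_1}$ is nearly horizontal.

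On the comparison: the direct-contradiction proof is shorter. The modulus route makes the dependence of $d_0$ on the modulus $\omega$, on $\beta$ and on the $C^{1,\beta}$ constant explicit. It would become fully quantitative if you inserted an explicit planar modulus such as the one in Liu's paper.

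A few steps should be stated explicitly in a write-up:
\begin{enumerate}[(i)]
\item To apply the modulus you need $|w-\tilde x|\geq t_0>0$. This follows from $\varphi(w)-\varphi(\tilde x)\geq(\theta_1-\theta)/2$ and $|\nabla\varphi|\leq C$ on $\overline{S_{\theta_1}}$.
\item The inequality $(\varphi_1(w)-\varphi_1(\tilde x))(w_1-\tilde x_1)\geq\omega(t_0)$, together with $|w_1-\tilde x_1|\leq C$, then gives the gap in $\varphi_1$.
\item The bound $\dist(S_\theta,\partial S_{\theta_1})\geq c_0$ comes from the same Lipschitz estimate, not from the compact inclusion in $S_1$ alone.
\end{enumerate}
You should also remove the abandoned $\ell(t)$ and $w'$ attempts, so that only the final route remains.
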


\begin{proof}
The diameter bound follows from Lemma~\ref{lem:Caffarelli-C1beta}. The first
component of $\mathcal P$ is $\varphi_1$, and the second component is $x_2$.
Both are uniformly bounded on a fixed lower section.

We prove the separation estimate. Since $\overline{S_{\theta_1}}\Subset S_1$,
the restriction of $\mathcal P$ to $\overline{S_{\theta_1}}$ is a
homeomorphism onto its image. Hence
\[
\overline{Q_{\theta_1}}=\mathcal P(\overline{S_{\theta_1}}),
\qquad
\partial Q_{\theta_1}=\mathcal P(\partial S_{\theta_1}).
\]
Suppose by contradiction that the separation estimate fails. Then there are
normalized potentials $\varphi_k$ satisfying the same determinant bounds and
points $x^{(k)}\in S_\theta^{(k)}$, $y^{(k)}\in\partial S_{\theta_1}^{(k)}$,
such that
\begin{equation}\label{eq:P-separation-contradiction}
|\mathcal P_k(x^{(k)})-\mathcal P_k(y^{(k)})|\to0,
\qquad
\mathcal P_k=(\partial_1\varphi_k,x_2).
\end{equation}
Set $\theta_2=\frac{1+\theta_1}{2}$. By normalized compactness, after passing to a
subsequence the potentials converge locally uniformly to a normalized convex
Alexandrov solution $\varphi_\infty$ with the same determinant bounds. The
quantitative inclusion of lower sections and Lemma~\ref{lem:Caffarelli-C1beta}
give $C^1$ convergence on a neighborhood of the points under consideration.
We may also assume
$x^{(k)}\to x_\infty$, $y^{(k)}\to y_\infty$.
The convergence of the sections gives
\[
\varphi_\infty(x_\infty)\leq\theta,
\qquad
\varphi_\infty(y_\infty)=\theta_1,
\]
so $x_\infty\neq y_\infty$. On the other hand,
\eqref{eq:P-separation-contradiction} and the $C^1$ convergence imply
\[
(x_\infty)_2=(y_\infty)_2,
\qquad
(\varphi_\infty)_1(x_\infty)
=(\varphi_\infty)_1(y_\infty).
\]
The horizontal segment joining $x_\infty$ and $y_\infty$ is contained in
$\{\varphi_\infty\leq\theta_1\}\Subset S_1$. The restriction of
$\varphi_\infty$ to
this segment is a one-dimensional convex $C^1$ function whose derivative has
the same value at the two endpoints. It is therefore affine on the segment,
contrary to Lemma~\ref{lem:Caffarelli-C1beta}.

The proof for $(x_1,x_2)\mapsto(x_1,\varphi_2)$ is identical.
\end{proof}

\begin{rem}
The separation in Lemma~\ref{lem:P-interior-control} can also be obtained from
a quantitative modulus of convexity. Recall that, for a differentiable convex
function $\varphi$, one may define the local modulus of convexity
\[
m_\varphi(t;K):=
\inf\Bigl\{
\varphi(y)-\varphi(x)-\nabla\varphi(x)\cdot(y-x):
x,y\in K,\ |x-y|\geq t
\Bigr\},
\]
where $K\Subset\Omega$. In dimension two, the Aleksandrov--Heinz theorem gives
strict convexity for generalized solutions whose Monge--Amp\`ere measure has a
positive lower bound. See Heinz \cite{He} and also
\cite[Remark~3.2]{TW} for an elementary proof. In a normalized compact family
of sections, this strict convexity is quantitative on compact subsets. More
precisely, if $K$ is quantitatively contained in $S_1$, then for every $t>0$
there is a number $\omega(t)>0$,
depending only on $\lambda,\Lambda,t$ and the relative position of $K$ in
$S_1$, such that
\begin{equation}\label{eq:modulus-lower}
m_\varphi(t;K)\geq \omega(t).
\end{equation}
Indeed, otherwise normalized compactness would give a sequence of potentials
and pairs of points, staying in a fixed lower section and a fixed positive
distance apart, for which the left-hand side tends to zero. After passing to a
subsequence, the potentials converge in $C^1$ on a lower section. The limiting
potential would then be affine on a nontrivial segment, contradicting planar
strict convexity. For a quantitative formula for the modulus in dimension two,
see \cite[Lemma~2.5]{Liu}. Its proof develops the argument in
\cite[Section~3]{TW} and gives a lower bound in terms of a boundary gradient
quantity.

Let us indicate how such a modulus gives another proof of the positive
separation in Lemma~\ref{lem:P-interior-control}. Fix
$0<\theta<\theta_1<1$ and set $\theta_2=\frac{1+\theta_1}{2}$.
By the compact inclusion of lower sections and
Lemma~\ref{lem:Caffarelli-C1beta}, there are $C_0<\infty$, $s_0>0$ and
$\delta_0>0$ such that
\[
\{z:\dist(z,\overline{S_{\theta_1}})\leq\delta_0\}
\Subset S_{\theta_2},
\qquad
\|\nabla\varphi\|_{C^\beta(S_{\theta_2})}\leq C_0,
\]
and
\begin{equation}\label{eq:section-point-separation}
|x-y|\geq s_0
\end{equation}
whenever $x\in S_\theta$ and $y\in\partial S_{\theta_1}$. We use
\eqref{eq:modulus-lower} below with $K=\overline{S_{\theta_2}}$. To see the
last inequality, note that the segment joining $x$ and $y$ is contained in
$\overline{S_{\theta_1}}$, and hence
\[
\theta_1-\theta
\leq\varphi(y)-\varphi(x)
\leq C_0|x-y|.
\]

Suppose now that $|\mathcal P(x)-\mathcal P(y)|$ is small for some
$x\in S_\theta$ and $y\in\partial S_{\theta_1}$. Then both
$|x_2-y_2|$ and $|\varphi_1(x)-\varphi_1(y)|$
are small. Let $\bar y:=(y_1,x_2)$.
If $|x_2-y_2|$ is sufficiently small, then
$|\bar y-y|<\delta_0$. Hence $\bar y\in S_{\theta_2}$, and the vertical
segment joining $y$ and $\bar y$ also lies in $S_{\theta_2}$. The
$C^{1,\beta}$ estimate gives
\begin{equation}\label{eq:vertical-gradient-control}
|\varphi_1(\bar y)-\varphi_1(y)|
\leq C_0|x_2-y_2|^\beta.
\end{equation}
It follows that $|\varphi_1(\bar y)-\varphi_1(x)|$ is small. On the other
hand, \eqref{eq:section-point-separation} and the smallness of $|x_2-y_2|$
imply $|x_1-y_1|\geq\frac{s_0}{2}$.
By convexity, the horizontal segment joining $x$ and $\bar y$ lies in
$S_{\theta_2}$. Therefore \eqref{eq:modulus-lower} gives
\begin{equation}\label{eq:horizontal-modulus}
\varphi(\bar y)-\varphi(x)
-\nabla\varphi(x)\cdot(\bar y-x)
\geq\omega(\frac{s_0}{2})>0.
\end{equation}
For a differentiable convex function $f$ on an interval,
\[
0\leq f(b)-f(a)-f'(a)(b-a)
\leq |b-a|\,|f'(b)-f'(a)|.
\]
Applying this to $f(t)=\varphi(t,x_2)$ and using
$|x_1-y_1|\leq\diam S_{\theta_2}\leq C$, we obtain from
\eqref{eq:horizontal-modulus}
\[
|\varphi_1(\bar y)-\varphi_1(x)|\geq c>0,
\]
which contradicts \eqref{eq:vertical-gradient-control} and the assumed
smallness of $|\mathcal P(x)-\mathcal P(y)|$. This gives another proof of the
first estimate in \eqref{eq:P-distance-diam}. 
\end{rem}

\section{Partial Legendre transform and adjoint quotients}\label{sec:PL-quotient}

We first derive the equation under the partial Legendre transform. Let
\[
(\xi,\eta)=\mathcal P(x_1,x_2):=(\varphi_1(x_1,x_2),x_2),
\]
and write $\mathcal Q=\mathcal P^{-1}$. The partial Legendre transform of $\varphi$ in
the $x_1$-variable is
\[
\varphi^\star(\xi,\eta)
=\sup_{x_1}\{x_1\xi-\varphi(x_1,\eta)\}
=x_1\xi-\varphi(x_1,\eta),
\]
where $(\xi,\eta)=\mathcal P(x_1,\eta)$. We have
\[
\varphi^\star_\xi=x_1,
\qquad
\varphi^\star_\eta=-\varphi_2(\mathcal Q(\xi,\eta)).
\]
Differentiating once more gives
\begin{equation}\label{eq:phi-star-second}
\varphi^\star_{\xi\xi}=\frac1{\varphi_{11}},
\qquad
\varphi^\star_{\xi\eta}=-\frac{\varphi_{12}}{\varphi_{11}},
\qquad
\varphi^\star_{\eta\eta}=-\frac{\det D^2\varphi}{\varphi_{11}},
\end{equation}
where the derivatives of $\varphi$ on the right-hand side are evaluated at
$\mathcal Q(\xi,\eta)$. Set
\[
a:=\varphi^\star_{\xi\xi}>0,
\qquad
c:=\varphi^\star_{\xi\eta},
\qquad
g:=\det D^2\varphi\circ \mathcal Q.
\]
Then
\begin{equation}\label{eq:phi-star-linear}
\varphi^\star_{\eta\eta}+g\varphi^\star_{\xi\xi}=0,
\qquad
\lambda\leq g\leq\Lambda.
\end{equation}
We write
\[
\Lg:=g\partial_{\xi\xi}+\partial_{\eta\eta},
\qquad
\Lgs m:=\partial_{\xi\xi}(gm)+\partial_{\eta\eta}m.
\]

\begin{lem}
Let $u$ be a solution to $L_\varphi u=0$ on a section. Denote
$p:=u_1\circ \mathcal Q$.
With the notation above, we have
\begin{equation}\label{eq:adjoint-a-ap}
\Lgs a=0,
\qquad
\Lgs(ap)=0.
\end{equation}
\end{lem}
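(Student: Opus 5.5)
The plan is to reduce both identities in \eqref{eq:adjoint-a-ap} to two second-order equations in the $(\xi,\eta)$ variables. The first is \eqref{eq:phi-star-linear}. The second is the divergence-form equation
\begin{equation*}
(g\widetilde u_\xi)_\xi+\widetilde u_{\eta\eta}=0,\qquad \widetilde u:=u\circ\mathcal Q .
\end{equation*}
Once these are in hand, each identity follows by one more differentiation.

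First I would identify $ap$. From $u(x_1,x_2)=\widetilde u(\varphi_1(x_1,x_2),x_2)$ the chain rule gives $u_1=\varphi_{11}\,\widetilde u_\xi$. By \eqref{eq:phi-star-second}, $\varphi_{11}=1/a$, so $p=\widetilde u_\xi/a$, that is, $ap=\widetilde u_\xi$.

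The identity for $a$ is immediate from \eqref{eq:phi-star-linear}. Since $ga=-\varphi^\star_{\eta\eta}$,
\[
\Lgs a=\partial_{\xi\xi}(ga)+\partial_{\eta\eta}a=-\varphi^\star_{\eta\eta\xi\xi}+\varphi^\star_{\xi\xi\eta\eta}=0,
\]
using smoothness of $\varphi^\star$, which is inherited from $\varphi$ through $\mathcal Q$. Similarly, once the equation for $\widetilde u$ is known,
\[
\Lgs(ap)=\partial_{\xi\xi}(g\widetilde u_\xi)+\partial_{\eta\eta}\widetilde u_\xi=\partial_\xi\bigl[(g\widetilde u_\xi)_\xi+\widetilde u_{\eta\eta}\bigr]=0 .
\]

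The real content is therefore the transformed equation for $\widetilde u$, and I would derive it through the weak formulation. The cofactor matrix is divergence free, so $L_\varphi u=\partial_i(\Phi^{ij}u_j)$. Hence for every $\psi\in C^\infty_c$ of the section,
\[
\int \Phi^{ij}u_i\psi_j\,dx=0 .
\]
Set $\widetilde\psi=\psi\circ\mathcal Q$. The chain rule reads $\partial_{x_1}=\varphi_{11}\partial_\xi$ and $\partial_{x_2}=\varphi_{12}\partial_\xi+\partial_\eta$, and the Jacobian is $dx=a\,d\xi\,d\eta$ since $\det D\mathcal P=\varphi_{11}$. Substitute these together with $\Phi=\begin{pmatrix}\varphi_{22}&-\varphi_{12}\\-\varphi_{12}&\varphi_{11}\end{pmatrix}$. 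Writing $\varphi_{11}=1/a$, $\varphi_{12}=-c/a$ and $\det D^2\varphi=g$, I expect the integrand to collapse to $\bigl(g\widetilde u_\xi\widetilde\psi_\xi+\widetilde u_\eta\widetilde\psi_\eta\bigr)/a$ before multiplication by the Jacobian $a$. The cross terms involving $c$ should cancel exactly because of the identity $\varphi_{22}\varphi_{11}-\varphi_{12}^2=g$. The result is
\[
\int\bigl(g\widetilde u_\xi\widetilde\psi_\xi+\widetilde u_\eta\widetilde\psi_\eta\bigr)\,d\xi\,d\eta=0
\]
for all test functions. Since everything is smooth, this gives the pointwise equation.

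The main obstacle is only the bookkeeping in this substitution, namely checking that the mixed terms cancel. Using the weak form is deliberate: it avoids differentiating the coefficients $\varphi_{11}$ and $\varphi_{12}$, which a direct pointwise chain-rule computation of $u_{11},u_{12},u_{22}$ would require. That computation would then have to recombine the resulting derivative terms into $g_\xi\widetilde u_\xi$ via $\varphi^\star_{\eta\eta}=-ga$, which is more error-prone. If I did the pointwise computation instead, I would use that relation at that step.
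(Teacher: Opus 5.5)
Your proof is correct. For $\Lgs a=0$ and for the final step $\Lgs(ap)=\partial_\xi\bigl[(g\widetilde u_\xi)_\xi+\widetilde u_{\eta\eta}\bigr]$ you argue exactly as the paper does: its double differentiation of $\varphi^\star_{\eta\eta}+g\varphi^\star_{\xi\xi}=0$ in $\xi$ is your commutation of derivatives on $\varphi^\star$.

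The genuine difference is how you obtain the divergence-form equation for $\widetilde u$. The paper uses that $\Phi$ is divergence free to introduce a stream function $v$ with $\nabla v=J\Phi\nabla u$. It then checks pointwise by the chain rule that $\widetilde v_\xi=-\widetilde u_\eta$ and $\widetilde v_\eta=g\widetilde u_\xi$, and gets the equation by cross-differentiation. You instead pull back the weak formulation. This is the transformation law $\widetilde\Phi=(\det D\mathcal P)^{-1}D\mathcal P\,\Phi\,(D\mathcal P)^{T}$ (composed with $\mathcal Q$) for divergence-form coefficients. With $D\mathcal P=\begin{pmatrix}\varphi_{11}&\varphi_{12}\\0&1\end{pmatrix}$ this equals $\mathrm{diag}(g,1)$, and your computation confirms it.

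One small correction to your bookkeeping. The mixed terms $\widetilde u_\xi\widetilde\psi_\eta$ and $\widetilde u_\eta\widetilde\psi_\xi$ cancel identically, with no identity needed. The relation $\varphi_{11}\varphi_{22}-\varphi_{12}^2=g$ is what reduces the coefficient of $\widetilde u_\xi\widetilde\psi_\xi$ in the $x$-integrand to $g\varphi_{11}$; that is where the $c^2$ terms cancel.

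As for what each route buys: yours needs no potential, hence no simple connectedness (though that is automatic for sections), and it exhibits the transformed operator directly. The paper's route stays entirely pointwise, avoiding the Jacobian and the weak-to-classical passage. It also yields the conjugate first-order system for $(\widetilde u,\widetilde v)$ as a by-product.
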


\begin{proof}
Let $A:=\varphi^\star_\xi=x_1$. Differentiating
\eqref{eq:phi-star-linear} in $\xi$ gives
\[
(gA_\xi)_\xi+A_{\eta\eta}=0.
\]
Since $A_\xi=a$, another differentiation in $\xi$ gives
\[
\partial_{\xi\xi}(ga)+\partial_{\eta\eta}a=0.
\]
This is the first equation in \eqref{eq:adjoint-a-ap}.

Since $\Phi$ is divergence free
and sections are simply connected, there is a function $v$, unique up
to an additive constant, such that
\[
\nabla v=J\Phi\nabla u,
\qquad
J=\begin{pmatrix}0&-1\\1&0\end{pmatrix}.
\]
Thus
\begin{equation}\label{eq:stream-components}
v_1=\varphi_{12}u_1-\varphi_{11}u_2,
\qquad
v_2=\varphi_{22}u_1-\varphi_{12}u_2.
\end{equation}
Define
\[
\widetilde u:=u\circ \mathcal Q,
\qquad
\widetilde v:=v\circ \mathcal Q.
\]
Since
\[
x_{1,\xi}=\frac1{\varphi_{11}},
\qquad
x_{1,\eta}=-\frac{\varphi_{12}}{\varphi_{11}},
\]
we have
\begin{equation}\label{eq:utilde-derivatives}
\widetilde u_\xi=\frac{u_1}{\varphi_{11}},
\qquad
\widetilde u_\eta
=u_2-\frac{\varphi_{12}}{\varphi_{11}}u_1.
\end{equation}
Combining \eqref{eq:stream-components} and
\eqref{eq:utilde-derivatives}, we obtain
\[
\widetilde v_\xi=-\widetilde u_\eta,
\qquad
\widetilde v_\eta=g\widetilde u_\xi.
\]
Hence
\begin{equation}\label{eq:utilde-div}
(g\widetilde u_\xi)_\xi+\widetilde u_{\eta\eta}=0.
\end{equation}
By \eqref{eq:phi-star-second} and \eqref{eq:utilde-derivatives},
$\widetilde u_\xi=ap$. Differentiating \eqref{eq:utilde-div} in $\xi$ gives
\[
\partial_{\xi\xi}(gap)+\partial_{\eta\eta}(ap)=0,
\]
which is the second equation.
\end{proof}

\begin{rem}
There is also a divergence form equation for $p$. If
$F=(\varphi^\star_\xi,-\varphi^\star_\eta)$, then
\[
DF=
\begin{pmatrix}
a&c\\
-c&ga
\end{pmatrix}
\]
and \eqref{eq:adjoint-a-ap} gives
\begin{equation}\label{eq:p-div-cof}
\Div\left(\cof DF\,\nabla p\right)=0,
\end{equation}
where 
\[\cof DF=
\begin{pmatrix}
ga&c\\
-c&a
\end{pmatrix}\] 
is the cofactor of $DF$. Indeed, using
$c_\eta=-(ga)_\xi$ and $c_\xi=a_\eta$, the left-hand side equals
\[
ga\,p_{\xi\xi}+a\,p_{\eta\eta}
+2(ga)_\xi p_\xi+2a_\eta p_\eta,
\]
which is the equation obtained from
$L_g^*(ap)-pL_g^*a=0$.
However, equation \eqref{eq:p-div-cof} is not uniformly elliptic, since the symmetric
part of $\cof DF$ is
\[
\begin{pmatrix}ga&0\\0&a\end{pmatrix}
\]
and $a$ has no uniform upper or lower bound. Dividing by $a$ gives
\[
gp_{\xi\xi}+p_{\eta\eta}
+2\frac{(ga)_\xi}{a}p_\xi
+2\frac{a_\eta}{a}p_\eta=0.
\]
Here
$\frac{(ga)_\xi}{a}=g_\xi+g\frac{a_\xi}{a}$.
So the lower order terms involve derivatives of $g$. No estimate for these
terms follows from \eqref{eq:MA-basic}. We therefore keep the adjoint system
\eqref{eq:adjoint-a-ap}, which avoids differentiating $g$.
\end{rem}
\vskip 10pt

We recall Bauman's Harnack inequality that will be used below.
Let $D\subset\R^n$ be a smooth bounded domain, and let
$\mathcal L=b^{ij}D_{ij}$ with smooth symmetric coefficients satisfying
\[
\lambda_0|\zeta|^2
\leq
b^{ij}(z)\zeta_i\zeta_j
\leq
\Lambda_0|\zeta|^2
\qquad
\text{for all }z\in D,\ \zeta\in\R^n.
\]
Fix $x_0\in D$, and let $G_D(x_0,\cdot)$ denote the positive Green function
of $\mathcal L$ in $D$ with pole at $x_0$. In the second variable,
\[
\mathcal L^*G_D(x_0,\cdot)=0
\qquad
\text{in }D\setminus\{x_0\},
\]
where
\[
\mathcal L^*v:=D_{ij}(b^{ij}v).
\]
Following Bauman \cite{Ba}, a function $q$ on an open set
$U\Subset D\setminus\{x_0\}$ is called a \emph{normalized adjoint solution}
relative to $G_D(x_0,\cdot)$ if
\[
\mathcal L^*\bigl(qG_D(x_0,\cdot)\bigr)=0
\qquad
\text{in }U.
\]
Since $G_D(x_0,\cdot)$ itself solves the adjoint equation away from its pole,
the normalized adjoint solutions form a vector space containing the constants.
In particular, if $q$ is a normalized adjoint solution and $c\in\R$,
then $q-c$ is again a normalized adjoint solution on the same set. This
invariance allows one to pass directly from Bauman's Harnack inequality to a
quantitative oscillation estimate.

\begin{thm}
\label{prop:Bauman-normalized}
Let
$B_{2r}(z_0)\Subset D\setminus\{x_0\}$
and let $q$ be a smooth normalized adjoint solution in
$B_{2r}(z_0)$.
If $q\geq0$ in $B_{2r}(z_0)$, then
\begin{equation}\label{eq:Bauman-Harnack}
\sup_{B_r(z_0)}q
\leq
H\inf_{B_r(z_0)}q,
\end{equation}
where $H\geq1$ depends only on
$n,\lambda_0$, and $\Lambda_0$.

For a normalized adjoint solution of arbitrary sign, there exist
$\gamma\in(0,1)$ and $C<\infty$, depending only on the same data, such
that
\begin{equation}\label{eq:Bauman-normalized-holder}
r^\gamma[q]_{C^\gamma(B_r(z_0))}
\leq
C\,\osc_{B_{2r}(z_0)}q.
\end{equation}
The constants depend only on $n,\lambda_0$ and $\Lambda_0$.
\end{thm}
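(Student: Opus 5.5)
The first statement, the Harnack inequality \eqref{eq:Bauman-Harnack}, is Bauman's theorem \cite{Ba}, and I would quote it rather than reprove it. For orientation, here is how it is proved. Write $w:=qG_D(x_0,\cdot)$; this is a nonnegative adjoint solution in $B_{2r}(z_0)$. The essential input is the comparison principle for nonnegative adjoint solutions: any two of them, $w_1$ and $w_2$, in $B_{2r}(z_0)$ satisfy
\[
\frac{w_1(z)}{w_2(z)}\leq H\,\frac{w_1(z')}{w_2(z')}
\qquad\text{for all } z,z'\in B_r(z_0).
\]
Applying this with $w_1=w$ and $w_2=G_D(x_0,\cdot)$ gives \eqref{eq:Bauman-Harnack} for $q=w_1/w_2$.

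This comparison is derived from the representation of adjoint solutions via the adjoint Green function and the Harnack inequality for nonnegative adjoint solutions. Both facts rest on the Krylov--Safonov theory for $\mathcal L$, together with the doubling property of the $\mathcal L^*$-harmonic measure, which Fabes and Stroock obtain through Bauman's work. This step is the real depth of the theorem, and it is the part I would cite. Since the coefficients are smooth, all the objects involved are classical. Scaling $z\mapsto z_0+rz$ preserves the ellipticity constants and maps normalized adjoint solutions to normalized adjoint solutions, with the pole relocated and the Green function rescaled by a constant factor. Hence $H$ does not depend on $r$.

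For \eqref{eq:Bauman-normalized-holder} I would run the standard oscillation-decay argument, using the fact that normalized adjoint solutions form a vector space containing the constants. For $\rho\leq r$, set
\[
M(\rho):=\sup_{B_\rho(z)}q,\qquad m(\rho):=\inf_{B_\rho(z)}q,
\]
with a fixed center $z\in B_r(z_0)$ and $B_{2\rho}(z)\subset B_{2r}(z_0)$. Both $M(2\rho)-q$ and $q-m(2\rho)$ are nonnegative normalized adjoint solutions on $B_{2\rho}(z)$, and each ball $B_{2\rho}(z)$ is compactly contained in $D\setminus\{x_0\}$. Applying \eqref{eq:Bauman-Harnack} to each of them gives
\begin{align*}
M(2\rho)-m(\rho)&\leq H\bigl(M(2\rho)-M(\rho)\bigr),\\
M(\rho)-m(2\rho)&\leq H\bigl(m(\rho)-m(2\rho)\bigr).
\end{align*}
Adding the two inequalities yields
\[
\osc_{B_\rho(z)}q\leq\frac{H-1}{H+1}\,\osc_{B_{2\rho}(z)}q .
\]
Iterating this over dyadic radii gives
\[
\osc_{B_\rho(z)}q\leq C(\rho/r)^\gamma\osc_{B_{2r}(z_0)}q,
\qquad \gamma=\log_2\frac{H+1}{H-1}.
\]
I would start the iteration from a ball of radius $r/2$ about $z$, which lies inside $B_{2r}(z_0)$.

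Finally, take two points $z,z'\in B_r(z_0)$. If $|z-z'|<r/2$, apply the decay estimate on $B_{|z-z'|}(z)$. Otherwise, bound $|q(z)-q(z')|$ by the oscillation over $B_{2r}(z_0)$. In both cases
\[
|q(z)-q(z')|\leq C\,|z-z'|^\gamma r^{-\gamma}\osc_{B_{2r}(z_0)}q,
\]
which is \eqref{eq:Bauman-normalized-holder}.
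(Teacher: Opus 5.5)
Your proposal is correct and follows the paper's proof: you quote Bauman for the Harnack inequality, then get the H\"older bound by applying it to $q-m$ and $M-q$ to obtain the decay factor $\frac{H-1}{H+1}$, iterating dyadically, and splitting into $|z-z'|<r/2$ versus $|z-z'|\geq r/2$. Adding the two inequalities, as you do, is slightly cleaner than the paper's case split on $H=1$. Only small repairs are needed: Bauman states the result on $B_{4r}$, so the $B_{2r}$ form needs the finite Harnack chain the paper mentions; $\gamma=\log_2\frac{H+1}{H-1}$ must be capped below $1$ (e.g.\ by enlarging $H$); you should use $B_{2|z-z'|}(z)$ rather than $B_{|z-z'|}(z)$, which does not contain $z'$; and your orientation remark should not appeal to a pointwise Harnack inequality for nonnegative adjoint solutions, since none holds with constants depending only on $n,\lambda_0,\Lambda_0$, which is precisely why one normalizes by $G_D(x_0,\cdot)$.
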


\begin{proof}
Bauman's Harnack principle \cite[Theorem~4.4]{Ba} is stated with
$B_{4r}(z_0)$ in place of $B_{2r}(z_0)$. The form
\eqref{eq:Bauman-Harnack} follows from that result by a finite Harnack chain
inside $B_{2r}(z_0)$. We prove the H\"older estimate from this form of the
Harnack inequality. See also \cite[Theorem~4.5]{Ba}.

Let
$B_{2s}(z)\Subset B_{2r}(z_0)
$
and introduce the notation
\[
m:=\inf_{B_{2s}(z)}q,
\qquad
M:=\sup_{B_{2s}(z)}q,
\]
and
\[
m_1:=\inf_{B_s(z)}q,
\qquad
M_1:=\sup_{B_s(z)}q.
\]
Both $q-m$ and $M-q$
are normalized adjoint solutions in $B_{2s}(z)$. They are nonnegative
there, so \eqref{eq:Bauman-Harnack} gives
\begin{align}
M_1-m
&\leq
H(m_1-m),                                      \label{eq:Bauman-lower-gap}\\
M-m_1
&\leq
H(M-M_1).                                      \label{eq:Bauman-upper-gap}
\end{align}
Set
\[
\omega:=M-m=\osc_{B_{2s}(z)}q,
\qquad
\omega_1:=M_1-m_1=\osc_{B_s(z)}q.
\]
Since
\[
M_1-m=(m_1-m)+\omega_1,
\]
inequality \eqref{eq:Bauman-lower-gap} implies
\[
\omega_1\leq(H-1)(m_1-m).
\]
Similarly, \eqref{eq:Bauman-upper-gap} implies
\[
\omega_1\leq(H-1)(M-M_1).
\]
If $H=1$, then $\omega_1=0$. If $H>1$, the preceding inequalities
give
\[
m_1-m\geq\frac{\omega_1}{H-1},
\qquad
M-M_1\geq\frac{\omega_1}{H-1}.
\]
Using
\[
\omega
=
(m_1-m)+\omega_1+(M-M_1),
\]
we conclude in both cases that
\begin{equation}\label{eq:Bauman-oscillation-decay}
\osc_{B_s(z)}q
\leq
\vartheta\,\osc_{B_{2s}(z)}q,
\end{equation}
where $\vartheta:=\frac{H-1}{H+1}\in[0,1)$.

Fix $x\in B_r(z_0)$. Since
$\overline{B_r(x)}\subset B_{2r}(z_0)$,
we may apply \eqref{eq:Bauman-oscillation-decay} successively to the
concentric balls
$B_r(x),\ B_{r/2}(x),\ B_{r/4}(x),\cdots$.
Thus, for every integer $k\geq0$,
\begin{equation}\label{eq:Bauman-dyadic-decay}
\osc_{B_{2^{-k}r}(x)}q
\leq
\vartheta^k\osc_{B_r(x)}q
\leq
\vartheta^k\osc_{B_{2r}(z_0)}q.
\end{equation}
Choose $\gamma\in(0,1)$, depending only on $H$, such that
$\vartheta\leq2^{-\gamma}$.
For instance, if $\vartheta>0$, one may take
\[
\gamma
=
\min\left\{
\frac12,
-\frac{\log\vartheta}{\log2}
\right\},
\]
while if $\vartheta=0$, one may take $\gamma=\frac12$.

Given $0<\rho\leq r$, choose $k\geq0$ such that
\[
2^{-(k+1)}r<\rho\leq2^{-k}r.
\]
It follows from \eqref{eq:Bauman-dyadic-decay} that
\begin{equation}\label{eq:Bauman-power-decay}
\osc_{B_\rho(x)}q
\leq
C\left(\frac{\rho}{r}\right)^\gamma
\osc_{B_{2r}(z_0)}q.
\end{equation}

Let $x,y\in B_r(z_0)$. If $0<|x-y|<r/2$, then
$y\in B_{2|x-y|}(x)$,
and \eqref{eq:Bauman-power-decay} yields
\[
|q(x)-q(y)|
\leq
C\left(\frac{|x-y|}{r}\right)^\gamma
\osc_{B_{2r}(z_0)}q.
\]
If $|x-y|\geq r/2$, the same estimate follows from
\[
|q(x)-q(y)|
\leq
\osc_{B_{2r}(z_0)}q
\leq
2^\gamma
\left(\frac{|x-y|}{r}\right)^\gamma
\osc_{B_{2r}(z_0)}q.
\]
Taking the supremum over $x,y\in B_r(z_0)$ proves
\eqref{eq:Bauman-normalized-holder}.
\end{proof}

We shall use the following consequence of Theorem~\ref{prop:Bauman-normalized}.
The numerator and denominator are normalized by the same Green function, which
cancels in the quotient.

\begin{cor}
\label{cor:Bauman-quotient}
Let $g\in C^\infty(B_{8r}(z_0))$ satisfy
\[
\lambda\leq g\leq\Lambda
\qquad
\text{in }B_{8r}(z_0).
\]
Let $a>0$ and $w$ be smooth functions satisfying
\[
L_g^*a=0,
\qquad
L_g^*w=0
\qquad
\text{in }B_{8r}(z_0).
\]
Suppose that
$|w|\leq Ma$ in $B_{2r}(z_0)$ for some $M\geq0$. Then there exist $\gamma\in(0,1)$ and
$C<\infty$, depending only on $\lambda$ and $\Lambda$, such that
\begin{equation}\label{eq:Bauman-quotient-estimate}
\left\|\frac{w}{a}\right\|_{L^\infty(B_r(z_0))}
+
r^\gamma
\left[\frac{w}{a}\right]_{C^\gamma(B_r(z_0))}
\leq
CM.
\end{equation}
In particular, if $w=ap$, then
\[
\|p\|_{L^\infty(B_r(z_0))}
+
r^\gamma[p]_{C^\gamma(B_r(z_0))}
\leq
C\|p\|_{L^\infty(B_{2r}(z_0))}.
\]
No modulus of continuity or higher norm of $g$ enters the estimate.
\end{cor}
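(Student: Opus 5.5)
The idea is to realize $a$ as a positive multiple of a Green function, so that $q:=w/a$ becomes a normalized adjoint solution in the sense of Theorem~\ref{prop:Bauman-normalized}. Then the theorem gives the H\"older bound, and the hypothesis $|w|\le Ma$ gives the $L^\infty$ bound.

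\emph{Step 1: choice of domain and operator.} By translation and scaling $(\xi,\eta)\mapsto z_0+r(\xi,\eta)$ we take $r=1$ and $z_0=0$. Both equations and the bounds on $g$ are invariant under this rescaling. Let $D=B_6$ and $\mathcal L=L_g$ on $D$, so $n=2$, $\lambda_0=\min\{\lambda,1\}$ and $\Lambda_0=\max\{\Lambda,1\}$. Fix a pole $x_0\in D$ away from $B_2$, say $x_0=(5,0)$; then $B_2\Subset D\setminus\{x_0\}$.

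\emph{Step 2: the Green function as $a$.} The point is that Bauman's Harnack principle only uses that the normalizing function is a positive adjoint solution near $B_2$. We could try to invoke Bauman's boundary comparison for positive adjoint solutions, but it is simpler to rerun the argument with $a$ as the normalizer. Indeed, $\mathcal L^*(qa)=0$ means exactly that $q$ solves the equation
\[
g\,q_{\xi\xi}+q_{\eta\eta}+\frac{2(ga)_\xi}{a}\,q_\xi+\frac{2a_\eta}{a}\,q_\eta=0
\]
from the preceding remark. Bauman's proof of the Harnack inequality for normalized adjoint solutions treats $qG$ as a quotient of two positive adjoint solutions. Her Theorem~4.4 is stated for any positive adjoint solution in place of $G_D(x_0,\cdot)$, since the key comparison (her doubling and Harnack principle for positive adjoint solutions) holds for every positive adjoint solution.

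So the plan is to state and use Theorem~\ref{prop:Bauman-normalized} with $G_D(x_0,\cdot)$ replaced by $a$. If one insists on the Green function literally, write $a=G_D(x_0,\cdot)\,h$ with $h$ a normalized adjoint solution. Then apply the Harnack inequality to $h>0$, and to $w/G$ after adding $M h$ to make it nonnegative. This expresses $q=w/a$ as a quotient of two normalized adjoint solutions with Harnack-comparable denominators. I expect this justification to be the main obstacle: making sure the constants in Bauman's Harnack principle do not depend on the particular positive adjoint solution used for normalization.

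\emph{Step 3: conclusion.} The function $q=w/a$ satisfies $\mathcal L^*(qa)=0$ in $B_8\supset B_2$ and $|q|\le M$ on $B_2$. So $\|q\|_{L^\infty(B_1)}\le M$ and $\osc_{B_2}q\le 2M$. Estimate \eqref{eq:Bauman-normalized-holder} then gives $[q]_{C^\gamma(B_1)}\le 2CM$. Scaling back yields \eqref{eq:Bauman-quotient-estimate} with the factor $r^\gamma$. The particular case $w=ap$ follows with $M=\|p\|_{L^\infty(B_{2r}(z_0))}$, since $L_g^*(ap)=0$ by \eqref{eq:adjoint-a-ap}. The constants depend only on $\lambda$ and $\Lambda$ through $H$, and no regularity of $g$ beyond its bounds enters.
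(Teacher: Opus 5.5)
Your Step~3 applies \eqref{eq:Bauman-normalized-holder} to $q=w/a$. Theorem~\ref{prop:Bauman-normalized}, however, concerns functions normalized by the Green function $G_D(x_0,\cdot)$. Your reason for allowing $a$ as the normalizer is that Bauman's Theorem~4.4 is already stated for an arbitrary positive adjoint solution. That is an unverified claim about the reference, not an argument, and as written it is the one unproved step. You flag it yourself.

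The step is easy to close using the theorem as stated. Suppose $v\ge0$ and $a>0$ solve $L_g^*(\cdot)=0$ on $B_{2s}(z)\Subset D\setminus\{x_0\}$. Then $v/G$ and $a/G$ are normalized adjoint solutions there, and
\[
\sup_{B_s(z)}\frac va\le\frac{\sup_{B_s(z)}(v/G)}{\inf_{B_s(z)}(a/G)}\le H^2\,\frac{\inf_{B_s(z)}(v/G)}{\sup_{B_s(z)}(a/G)}\le H^2\inf_{B_s(z)}\frac va .
\]
Because $L_g^*a=0$, the functions $q$ with $L_g^*(qa)=0$ form a vector space containing the constants. So the oscillation-decay argument in the proof of Theorem~\ref{prop:Bauman-normalized} runs verbatim with $H$ replaced by $H^2$. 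Your Step~3 then goes through: $\osc_{B_2}q\le 2M$ gives $[q]_{C^\gamma(B_1)}\le CM$. Your choice $D=B_6$ with pole $(5,0)$ is adequate for this, since only balls inside $B_2$ are used.

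With this lemma, your route differs from the paper's. The paper keeps $G$ as the normalizer and sets $q_0=a/G$ and $q_\pm=(Ma\pm w)/G$. It uses Harnack for $q_0$ on $B_2$ (from $B_4$) and $q_++q_-=2Mq_0$ to bound $\osc_{B_2}q_\pm$. It then gets H\"older bounds for $q_0$ and $q_\pm$ from the theorem and recovers $w/a=(q_+-q_-)/(2q_0)$ by the elementary quotient rule for H\"older seminorms. That is exactly your fallback sketch, carried out.

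Your version proves a Harnack inequality for the ratio $w/a$ directly. It avoids the quotient rule and shows plainly why the Green function cancels, but it reopens the proof of Theorem~\ref{prop:Bauman-normalized} instead of using it as a black box. The paper's version uses the theorem purely as stated, at the cost of a few extra lines of bookkeeping.
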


\begin{proof}
By translation and scaling, it is enough to consider $z_0=0$ and $r=1$.
The case $M=0$ is immediate.

Choose $x_0\in B_8\setminus\overline{B_6}$, and let
$G=G_{B_8}(x_0,\cdot)$ be the Green function used in the definition of
normalized adjoint solutions. Set
\[
q_0:=\frac{a}{G}
\qquad\text{in }B_4,
\]
and
\[
q_\pm:=\frac{Ma\pm w}{G}
\qquad\text{in }B_2.
\]
Then $q_0$ is a positive normalized adjoint solution in $B_4$, while
$q_+$ and $q_-$ are nonnegative normalized adjoint solutions in $B_2$.
Moreover,
\begin{equation}\label{eq:Bauman-q-relations}
q_++q_-=2Mq_0,
\qquad
\frac{w}{a}=\frac{q_+-q_-}{2q_0}.
\end{equation}

Applying the Harnack estimate in
Theorem~\ref{prop:Bauman-normalized} to $q_0$ on
$B_2\subset B_4$, we obtain
\begin{equation}\label{eq:Bauman-q0-comparison}
C^{-1}q_0(0)
\leq q_0\leq
Cq_0(0)
\qquad\text{in }B_2.
\end{equation}
In particular,
\[
\osc_{B_2}q_0\leq Cq_0(0).
\]
Since $q_\pm\geq0$ and $q_++q_-=2Mq_0$,
\[
0\leq q_\pm\leq CMq_0(0)
\qquad\text{in }B_2,
\]
and hence
\[
\osc_{B_2}q_\pm\leq CMq_0(0).
\]

The H\"older estimate in
Theorem~\ref{prop:Bauman-normalized}, applied on
$B_1\subset B_2$, therefore gives
\[
[q_0]_{C^\gamma(B_1)}
\leq Cq_0(0),
\qquad
[q_\pm]_{C^\gamma(B_1)}
\leq CMq_0(0).
\]
Let $N:=q_+-q_-$.
Then
\[
\|N\|_{L^\infty(B_1)}+[N]_{C^\gamma(B_1)}
\leq CMq_0(0),
\]
whereas \eqref{eq:Bauman-q0-comparison} implies
\[
\inf_{B_1}q_0\geq C^{-1}q_0(0).
\]
The elementary quotient estimate
\[
\left[\frac{N}{q_0}\right]_{C^\gamma(B_1)}
\leq
\frac{[N]_{C^\gamma(B_1)}}{\inf_{B_1}q_0}
+
\frac{\|N\|_{L^\infty(B_1)}\cdot
      [q_0]_{C^\gamma(B_1)}}
     {(\inf_{B_1}q_0)^2}
\]
now yields
\[
\left[\frac{N}{q_0}\right]_{C^\gamma(B_1)}
\leq CM.
\]
Using \eqref{eq:Bauman-q-relations}, we conclude that
\[
\left[\frac{w}{a}\right]_{C^\gamma(B_1)}
\leq CM.
\]
The $L^\infty$ estimate follows directly from $|w|\leq Ma$.
Rescaling proves \eqref{eq:Bauman-quotient-estimate}. The final assertion
follows by taking
$w=ap$ and $M=\|p\|_{L^\infty(B_{2r}(z_0))}$.
\end{proof}

\section{Local boundedness of the quotient and proof of Theorem~\ref{thm:main}}\label{sec:bound-main}

Let $Q=\mathcal P(S_1)$. If $E\subset Q$ is measurable, then
\[
\mathrm{d}\xi\mathrm{d}\eta=\varphi_{11}(x)\,\mathrm{d}x,
\qquad
a(\xi,\eta)=\frac1{\varphi_{11}(\mathcal Q(\xi,\eta))}.
\]
Hence
\begin{equation}\label{eq:mass-identity}
\int_E a(\xi,\eta)\,\mathrm{d}\xi\mathrm{d}\eta=|\mathcal Q(E)|.
\end{equation}
This identity will be used below.

\begin{lem}\label{lem:mass-lower}
Let $S_1$ be normalized, set $Q_h:=\mathcal P(S_h)$, and fix
$0<\theta_1<1$. Then there exist $\beta\in(0,1)$ and $c_0,r_0>0$,
depending only on $\lambda,\Lambda$ and $\theta_1$, such that for every
$z\in Q_{\theta_1}$ and every $0<r\leq r_0$ satisfying
$B_r(z)\subset Q_{\theta_1}$,
\[
\int_{B_r(z)}a\,\mathrm{d}\xi\mathrm{d}\eta
\geq c_0r^{2/\beta}.
\]
\end{lem}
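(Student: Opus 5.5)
By the mass identity \eqref{eq:mass-identity}, the integral of $a$ over $B_r(z)$ equals $|\mathcal Q(B_r(z))|$. So the lemma reduces to a lower bound on the Lebesgue measure of the preimage of a small ball. The plan is to find an explicit set in $x$-space that $\mathcal P$ maps into $B_r(z)$, and whose measure is at least $c_0r^{2/\beta}$.

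Write $z=\mathcal P(x^0)$ with $x^0\in S_{\theta_1}$. Fix $\theta_2=\frac{1+\theta_1}{2}$. By Lemma~\ref{lem:Caffarelli-C1beta}, $\overline{S_{\theta_1}}\Subset S_{\theta_2}$ quantitatively: there is $\delta_0>0$, depending only on $\lambda,\Lambda,\theta_1$, with $B_{\delta_0}(x^0)\subset S_{\theta_2}$. Also $\|\nabla\varphi\|_{C^\beta(S_{\theta_2})}\le C_0$, which fixes the exponent $\beta$ in the statement. For $x\in B_\rho(x^0)$ with $\rho\le\delta_0$ we then have
\[
|\mathcal P(x)-\mathcal P(x^0)|\le |\varphi_1(x)-\varphi_1(x^0)|+|x_2-x^0_2|\le C_0\rho^\beta+\rho\le 2C_0\rho^\beta ,
\]
assuming $C_0\ge1$ and $\rho\le1$.

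Now choose $\rho=(r/(2C_0))^{1/\beta}$. Then $\mathcal P(B_\rho(x^0))\subset B_r(z)$. Because $\mathcal P$ is injective on $S_1$, this gives $\mathcal Q(B_r(z))\supset B_\rho(x^0)$. Strictly, one needs $B_r(z)\subset Q_{\theta_1}\subset Q$ so that $\mathcal Q$ is defined on $B_r(z)$, and this is exactly the hypothesis; the set $\mathcal Q(B_r(z)\cap\mathcal P(B_\rho(x^0)))$ contains $B_\rho(x^0)$. Hence
\[
\int_{B_r(z)}a\,\mathrm d\xi\mathrm d\eta=|\mathcal Q(B_r(z))|\ge \pi\rho^2=c_0r^{2/\beta}.
\]
The restriction $r\le r_0$ is chosen so that $\rho\le\min\{\delta_0,1\}$. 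That is, $r_0:=2C_0\min\{\delta_0,1\}^\beta$.

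There is no serious obstacle. The only point needing care is that $B_\rho(x^0)$ must lie in a region where the $C^{1,\beta}$ bound holds and where $\mathcal P$ is injective. The quantitative inclusion $S_{\theta_1}\Subset S_{\theta_2}\Subset S_1$ from Lemma~\ref{lem:Caffarelli-C1beta} provides both. Note that the hypothesis $B_r(z)\subset Q_{\theta_1}$ is not otherwise needed, since the argument only uses $z\in Q_{\theta_1}$.
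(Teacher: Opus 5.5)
Your proposal is correct and follows the paper's proof essentially step for step. You use the mass identity, the $C^\beta$ bound for $\mathcal P$ on $S_{\theta_2}$ with $\theta_2=\frac{1+\theta_1}{2}$, the inclusion $B_{(r/(2C_0))^{1/\beta}}(x^0)\subset\mathcal Q(B_r(z))$ by injectivity, and a choice of $r_0$ that keeps this ball inside $S_{\theta_2}$; the only cosmetic difference is that you bound the $x_2$-component of $\mathcal P$ separately rather than folding it into the constant $C_0$.
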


\begin{proof}
Set $\theta_2=\frac{1+\theta_1}{2}$. The quantitative compact inclusion of sections
gives
\[
\overline{S_{\theta_1}}\Subset S_{\theta_2}\Subset S_1
\]
with uniform separation. By Lemma~\ref{lem:Caffarelli-C1beta}, there are
$\beta\in(0,1)$ and $C_0<\infty$ such that
\begin{equation}\label{eq:P-holder}
|\mathcal P(x)-\mathcal P(y)|\leq C_0|x-y|^\beta
\qquad\text{for }x,y\in S_{\theta_2}.
\end{equation}

Choose $r_0>0$ so small that
\[
B_{(\frac{r}{2C_0})^{1/\beta}}(x)\subset S_{\theta_2}
\]
for every $x\in\overline{S_{\theta_1}}$ and $0<r\leq r_0$. Let
$z=\mathcal P(x)\in Q_{\theta_1}$. If
$|y-x|\leq(\frac{r}{2C_0})^{1/\beta}$, then
\eqref{eq:P-holder} gives $|\mathcal P(y)-\mathcal P(x)|<r$. Since
$\mathcal P$ is one-to-one,
\[
B_{(\frac{r}{2C_0})^{1/\beta}}(x)\subset \mathcal Q(B_r(z)).
\]
The mass identity \eqref{eq:mass-identity} therefore yields
\[
\int_{B_r(z)}a\,\mathrm{d}\xi\mathrm{d}\eta
=|\mathcal Q(B_r(z))|
\geq c_0r^{2/\beta}.
\]
\end{proof}

\begin{lem}\label{lem:caccioppoli}
Let $w$ satisfy
\[
(gw_\xi)_\xi+w_{\eta\eta}=0
\]
in $B_{2r}(z_0)$, where $\lambda\leq g\leq\Lambda$. Then
\[
\int_{B_r(z_0)}|w_\xi|\,\mathrm{d}\xi\mathrm{d}\eta
\leq Cr\,\osc_{B_{2r}(z_0)}w,
\]
where $C$ depends only on $\lambda$ and $\Lambda$.
\end{lem}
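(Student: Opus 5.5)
The plan is the standard energy (Caccioppoli) argument, followed by Cauchy--Schwarz to pass from $L^2$ to $L^1$. The equation is in divergence form with the diagonal coefficient matrix $\mathrm{diag}(g,1)$, and this matrix is uniformly elliptic with bounds $\min\{\lambda,1\}$ and $\max\{\Lambda,1\}$. Subtracting a constant does not change the equation. So we may replace $w$ by $w-k$ with $k=\frac12(\sup_{B_{2r}}w+\inf_{B_{2r}}w)$, which gives $\sup_{B_{2r}(z_0)}|w|\leq\frac12\osc_{B_{2r}(z_0)}w$. We then choose a cutoff $\zeta\in C_c^\infty(B_{2r}(z_0))$ with $0\leq\zeta\leq1$, $\zeta\equiv1$ on $B_r(z_0)$ and $|\nabla\zeta|\leq 2/r$.

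Multiplying the equation by $\zeta^2 w$ and integrating by parts gives
\[
\int \zeta^2\bigl(g w_\xi^2+w_\eta^2\bigr)
=-2\int \zeta w\bigl(g w_\xi\zeta_\xi+w_\eta\zeta_\eta\bigr).
\]
This is legitimate because $w$ is smooth in the applications ($w=\widetilde u$) and $\zeta$ has compact support. Young's inequality lets us absorb half of the left-hand side, which yields
\[
\int_{B_r(z_0)} w_\xi^2
\leq \frac{C}{\lambda}\int \bigl(g\zeta_\xi^2+\zeta_\eta^2\bigr)w^2
\leq C r^{-2}\,|B_{2r}|\,\sup_{B_{2r}(z_0)}w^2
\leq C\bigl(\osc_{B_{2r}(z_0)}w\bigr)^2,
\]
where $C$ depends only on $\lambda$ and $\Lambda$. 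The dimension is two, so $r^{-2}|B_{2r}|$ is a pure constant.

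Finally, Cauchy--Schwarz on $B_r(z_0)$ gives
\[
\int_{B_r(z_0)}|w_\xi|\leq |B_r|^{1/2}\Bigl(\int_{B_r(z_0)}w_\xi^2\Bigr)^{1/2}\leq C r\,\osc_{B_{2r}(z_0)}w,
\]
which is the claimed estimate. There is no real obstacle here. The only points to watch are that the constant must not depend on any regularity of $g$ (it does not, since no derivative of $g$ ever appears) and that the factor $r$ comes out exactly from the two-dimensional scaling.
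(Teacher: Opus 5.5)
Your proposal is correct and follows the paper's proof: a Caccioppoli inequality for the divergence-form operator $\partial_\xi(g\,\partial_\xi\,\cdot)+\partial_{\eta\eta}$, then Cauchy--Schwarz on $B_r(z_0)$, with two-dimensional scaling giving the factor $r$. The only differences are cosmetic: you derive the energy estimate explicitly, where the paper cites it, and you subtract the midpoint of the range instead of the mean $w_{B_{2r}}$.
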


\begin{proof}
By the Caccioppoli inequality,
\[
\int_{B_r(z_0)}|\nabla w|^2\,\mathrm{d}\xi\mathrm{d}\eta
\leq \frac{C}{r^2}
\int_{B_{2r}(z_0)}|w-w_{B_{2r}}|^2\,\mathrm{d}\xi\mathrm{d}\eta
\leq C(\osc_{B_{2r}(z_0)}w)^2.
\]
By the Cauchy--Schwarz inequality,
\[
\int_{B_r(z_0)}|w_\xi|\,\mathrm{d}\xi\mathrm{d}\eta
\leq |B_r|^{1/2}
\left(\int_{B_r(z_0)}|w_\xi|^2\,\mathrm{d}\xi\mathrm{d}\eta\right)^{1/2}
\leq Cr\,\osc_{B_{2r}(z_0)}w.
\]
\end{proof}

\begin{prop}\label{prop:p-bounded}
Let $S_1$ be normalized and let $0<\theta<1$. Then
\[
\|p\|_{L^\infty(\mathcal P(S_\theta))}
\leq C\,\osc_{S_1}u,
\]
where $C$ depends only on $\lambda,\Lambda$ and $\theta$.
\end{prop}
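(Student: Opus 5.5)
We combine the mass lower bound of Lemma~\ref{lem:mass-lower}, the Caccioppoli estimate of Lemma~\ref{lem:caccioppoli}, and the adjoint quotient estimate of Corollary~\ref{cor:Bauman-quotient}, all applied in $Q=\mathcal P(S_1)$. Normalize by subtracting a constant so that $\sup_{S_1}|u|\leq\osc_{S_1}u$; then $\widetilde u=u\circ\mathcal Q$ satisfies $|\widetilde u|\leq\osc_{S_1}u$ on $Q$. Fix $\theta<\theta_1<\theta'<1$. By Lemma~\ref{lem:P-interior-control}, $Q_\theta$ lies at distance at least $d_0$ from $\partial Q_{\theta_1}$, and $Q_{\theta_1}$ lies at a positive distance from $\partial Q_{\theta'}$. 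Choose $r\leq\min\{r_0,d_0/16\}$, depending only on $\lambda,\Lambda,\theta$. Then for every $z\in Q_\theta$ we have $B_{8r}(z)\subset Q_{\theta_1}$.

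The idea is to convert the integral bound on $\widetilde u_\xi=ap$ into a pointwise bound on $p$, using that $p$ is a ratio of two positive-adjoint-type solutions and hence comparable to its weighted averages. Set $M(z):=\sup_{B_{2r}(z)}|p|$, which is finite for smooth $u$ (a priori). Corollary~\ref{cor:Bauman-quotient} then gives H\"older control of $p$ on $B_r(z)$ with seminorm at most $CM(z)r^{-\gamma}$. This by itself is circular, so instead I would apply Harnack directly to a positive combination. Let $K:=\sup_{Q_{\theta_1}}|p|$ (finite a priori). On $B_{8r}(z)$ the function $Ka\pm ap$ is a nonnegative solution of $\Lgs(\cdot)=0$. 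Dividing by the Green function as in the proof of Corollary~\ref{cor:Bauman-quotient}, the normalized quotients $q_\pm=(K\pm p)q_0$ satisfy Bauman's Harnack inequality, and $q_0=a/G$ is comparable to a constant on $B_2$. Hence on $B_r(z)$ we get $K\pm p\leq C\inf_{B_r(z)}(K\pm p)$. This compares the pointwise values of $K\pm p$ with their $a$-weighted averages:
\[
\sup_{B_r(z)}(K\pm p)\leq \frac{C}{\int_{B_r(z)}a}\int_{B_r(z)}(K\pm p)\,a .
\]
Using $\int_{B_r(z)}pa=\int_{B_r(z)}\widetilde u_\xi$, Lemma~\ref{lem:caccioppoli} bounds this term by $Cr\osc_{S_1}u$, and Lemma~\ref{lem:mass-lower} gives $\int_{B_r(z)}a\geq c_0r^{2/\beta}$. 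Therefore
\[
\sup_{B_r(z)}(K\pm p)\leq C K + C_r\osc_{S_1}u .
\]

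The obstacle is that this inequality only returns $K$ with a constant $C\geq1$, so it does not close. The fix is to use the oscillation version instead. Apply the same Harnack comparison to $p-m$ and $M'-p$, where $m,M'$ are the infimum and supremum of $p$ over $B_{2r}(z)$. Taking the lower point gives
\[
\sup_{B_r}(p-m)\leq C\,\frac{\int_{B_r}(p-m)a}{\int_{B_r}a},
\]
and similarly for $M'-p$. Adding the two estimates, the $m,M'$ terms cancel except for contributions bounded by $C(M'-m)$, which is again not closing. A better route is the following. Harnack for $p-m\geq0$ yields
\[
p(z)-m\leq C\Bigl(\frac{\int_{B_r}pa}{\int_{B_r}a}-m\Bigr),
\]
and likewise $M'-p(z)\leq C\bigl(M'-\tfrac{\int pa}{\int a}\bigr)$. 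Writing $\bar p:=\int pa/\int a$, which satisfies $|\bar p|\leq C_r\osc u$, these two inequalities together say that $p(z)$ lies within a bounded-ratio proximity of $\bar p$ relative to $\osc_{B_{2r}}p$. Combined with the oscillation decay $\osc_{B_r}p\leq\vartheta\osc_{B_{2r}}p$ from Theorem~\ref{prop:Bauman-normalized}, this gives $|p(z)|\leq|\bar p|+\osc_{B_r}p$.

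To close the argument, I would cover $Q_\theta$ by a chain of balls and iterate the decay over a finite collection of nested sets between $Q_\theta$ and $Q_{\theta_1}$. The aim is the bound $\sup_{Q_\theta}|p|\leq C\osc u+\tfrac12\sup_{Q_{\theta'}}|p|$, after which a standard absorption lemma over $\theta\in(\theta_0,\theta_1)$ finishes. I expect this absorption step, ensuring the radii stay quantitatively controlled as $\theta$ varies, to be the main technical point.
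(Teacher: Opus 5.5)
You have the right ingredients (Lemma~\ref{lem:mass-lower}, Lemma~\ref{lem:caccioppoli}, and Harnack for $p$ relative to $a$). You also correctly see that every single-scale estimate returns $\sup|p|$ over a larger set with a constant that is not small. But the proposal stops exactly where the content of the proposition lies. The inequality $\sup_{Q_\theta}|p|\le C\osc_{S_1}u+\frac12\sup_{Q_{\theta'}}|p|$ is only announced as an aim, and the absorption step is left open.

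In the form you propose, absorption over section levels does not follow from what is available. Iterating over levels $\theta_0<\theta_1<\theta_2<\cdots$ requires $\sum_j 2^{-j}C(\theta_j,\theta_{j+1})<\infty$. Here $C(\theta,\theta')$ behaves like $\dist(Q_\theta,\partial Q_{\theta'})^{1-2/\beta}$, which blows up as the levels approach each other. So you would need a polynomial lower bound on $\dist(Q_\theta,\partial Q_{\theta'})$ in terms of $\theta'-\theta$. Lemma~\ref{lem:P-interior-control} is proved by compactness and gives only positivity for each fixed pair, with no rate.

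Two smaller points. First, $|p(z)|\le|\bar p|+\osc_{B_r}p$ is trivial, since $\bar p$ is an $a$-weighted average of $p$ on $B_r$; the Harnack inequalities you derive for it play no role. Second, the one-step factor $2\vartheta$ need not be less than $1$, so you must iterate the decay $k$ times with $2\vartheta^k\le\frac12$. The decay for $p=(ap)/a$ must then be derived through the quotient argument of Corollary~\ref{cor:Bauman-quotient}, because Theorem~\ref{prop:Bauman-normalized} concerns quotients by the Green function, not by $a$.

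The missing idea is to measure distance to the boundary in the $(\xi,\eta)$-plane rather than through section levels. The paper does this with a weighted maximum. Set $Q_\ast=\mathcal P(S_{(1+\theta)/2})$, $D(z)=\min\{\dist(z,\partial Q_\ast),r_0\}$, and fix an integer $N>2/\beta-1$. Take a maximum point $y$ of $|p|D^N$ on $\overline{Q_\ast}$.
\begin{itemize}
\item Maximality gives $|p|\le 2^N|p(y)|$ on $B_{D(y)/2}(y)$.
\item Corollary~\ref{cor:Bauman-quotient} then gives $|p|\ge\frac12|p(y)|$ on $B_{\tau D(y)}(y)$.
\item Comparing Lemma~\ref{lem:mass-lower} with Lemma~\ref{lem:caccioppoli} gives $|p(y)|\le CD(y)^{1-2/\beta}\osc_{S_1}u$.
\end{itemize}
The weight $D^N$ absorbs this blow-up in a single step.

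Your own route can be completed in the same spirit. Define $f(s):=\sup\{|p(z)|:z\in Q_\ast,\ \dist(z,\partial Q_\ast)>s\}$, which is finite because $p$ is continuous on $\overline{Q_\ast}\Subset Q$. Your oscillation-plus-average argument at radius $\rho=\min\{c(s-t),r_0\}$ yields $f(s)\le\frac12 f(t)+C\bigl((s-t)^{1-2/\beta}+1\bigr)\osc_{S_1}u$ for $0<t<s$. The standard iteration lemma then bounds $f(\delta)$, where $\delta>0$ is the separation of $\mathcal P(S_\theta)$ from $\partial Q_\ast$ given by Lemma~\ref{lem:P-interior-control}. Without one of these devices, the proposal does not yet prove the bound.
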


\begin{proof}
Set
\[
\theta_\ast:=\frac{1+\theta}{2},
\qquad
Q_\ast:=\mathcal P(S_{\theta_\ast}).
\]
Let $\beta,c_0,r_0$ be the constants in Lemma~\ref{lem:mass-lower} for
$\theta_\ast$. For $z\in Q_\ast$, set
\[
d(z):=\dist(z,\partial Q_\ast),
\qquad
D(z):=\min\{d(z),r_0\}.
\]
Choose an integer $N$ such that
\[
N>\frac2\beta-1.
\]

Note that $\overline{Q_\ast}\Subset Q$ and $p$ is smooth. While
$D=0$ on $\partial Q_\ast$, the function $|p|D^N$ attains its maximum on
$\overline{Q_\ast}$. Let $y\in Q_\ast$ be a
maximum point. The conclusion is immediate if this maximum is zero, so suppose
$|p(y)|\cdot D(y)^N>0$.

For $z\in B_{\frac{D(y)}{2}}(y)$, the $1$-Lipschitz property of the distance function
gives $D(z)\geq \frac{D(y)}{2}$. By the maximality,
\[
|p(z)|\leq2^N|p(y)|
\qquad\text{in }B_{\frac{D(y)}{2}}(y).
\]
Define
\[
q(z):=\frac{p(z)}{2^N|p(y)|},
\qquad
R_y:=\frac{D(y)}{16}.
\]
Then $|q|\leq1$ in $B_{8R_y}(y)$ and $|q(y)|=2^{-N}$.
If $w_q:=aq$, then
\[
L_g^*a=0,
\qquad
L_g^*w_q=0
\qquad\text{in }B_{8R_y}(y).
\]
Corollary~\ref{cor:Bauman-quotient}, applied with $M=1$, gives
\[
R_y^\gamma\cdot[q]_{C^\gamma(B_{R_y}(y))}\leq C.
\]
Choose $\tau\in(0,\frac{1}{16})$, depending only on $N,\lambda,\Lambda$, so that
$C(16\tau)^\gamma\leq2^{-N-1}$.
If $|z-y|\leq\tau D(y)$, then
\[
|q(z)-q(y)|\leq C\frac{|z-y|^\gamma}{R_y^{\gamma}}\leq C(16\tau)^\gamma\leq2^{-N-1}.
\]
Since $|q(y)|=2^{-N}$, we obtain
\[
|p(z)|\geq\frac12|p(y)|
\qquad\text{in }B_r(y),
\qquad
r:=\tau D(y).
\]

The ball $B_r(y)$ is contained in $Q_\ast$ and $r\leq r_0$, so
Lemma~\ref{lem:mass-lower} and the identity $\widetilde u_\xi=ap$ imply
\[
\int_{B_r(y)}|\widetilde u_\xi|\,\mathrm{d}\xi\mathrm{d}\eta
\geq c|p(y)|r^{2/\beta}.
\]
Since $2r=2\tau D(y)<D(y)$, the ball $B_{2r}(y)$ is compactly contained in
$Q_\ast$. Lemma~\ref{lem:caccioppoli} therefore gives
\[
\int_{B_r(y)}|\widetilde u_\xi|\,\mathrm{d}\xi\mathrm{d}\eta
\leq Cr\,\osc_{B_{2r}(y)}\widetilde u
\leq Cr\,\osc_{S_1}u.
\]
Consequently,
\begin{equation}\label{eq:p-max-bound}
|p(y)|\leq CD(y)^{1-2/\beta}\osc_{S_1}u.
\end{equation}

By Lemma~\ref{lem:P-interior-control}, $\delta:=\inf_{\mathcal P(S_\theta)}D>0$ with a lower bound depending only on $\lambda$, $\Lambda$ and $\theta$. For
$z\in \mathcal P(S_\theta)$, maximality of $y$ and
\eqref{eq:p-max-bound} give
\[
|p(z)|
\leq\delta^{-N}|p(y)|D(y)^N
\leq C\delta^{-N}D(y)^{N+1-2/\beta}\osc_{S_1}u.
\]
Because $N+1-\frac{2}{\beta}>0$ and $D(y)\leq r_0$, the right-hand side is bounded by
$C\osc_{S_1}u$. This proves the proposition.
\end{proof}

\begin{prop}\label{prop:p-holder}
Let $S_1$ be normalized and let $0<\theta<1$. Then
\[
\|p\|_{C^\gamma(\mathcal P(S_\theta))}
\leq C\,\osc_{S_1}u,
\]
where $\gamma\in(0,1)$ depends only on $\lambda$ and $\Lambda$, and $C$
depends only on $\lambda,\Lambda$ and $\theta$.
\end{prop}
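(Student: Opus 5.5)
The plan is to combine the local bound of Proposition~\ref{prop:p-bounded} with the quotient estimate of Corollary~\ref{cor:Bauman-quotient}, using Lemma~\ref{lem:P-interior-control} to get a uniform radius.

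\textbf{Setup.} Set $\theta_\ast:=\frac{1+\theta}{2}$. By Proposition~\ref{prop:p-bounded} applied with $\theta_\ast$ in place of $\theta$,
\[
\|p\|_{L^\infty(\mathcal P(S_{\theta_\ast}))}\leq C\osc_{S_1}u .
\]
By Lemma~\ref{lem:P-interior-control} with the pair $\theta<\theta_\ast$, we have $\dist(\mathcal P(S_\theta),\partial\mathcal P(S_{\theta_\ast}))\geq d_0$ and $\diam\mathcal P(S_\theta)\leq D_0$. Here $d_0$ and $D_0$ depend only on $\lambda,\Lambda,\theta$. Fix $r:=d_0/16$. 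Then for every $z_0\in\mathcal P(S_\theta)$ the ball $B_{8r}(z_0)$ lies in $\mathcal P(S_{\theta_\ast})$.

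\textbf{Local estimate.} On this ball we have $\lambda\leq g\leq\Lambda$, $a>0$ is smooth, and $L_g^*a=L_g^*(ap)=0$ by \eqref{eq:adjoint-a-ap}. Corollary~\ref{cor:Bauman-quotient} with $w=ap$ and $M=C\osc_{S_1}u$ then gives
\[
\|p\|_{L^\infty(B_r(z_0))}+r^\gamma[p]_{C^\gamma(B_r(z_0))}\leq C\osc_{S_1}u,
\]
with $\gamma$ depending only on $\lambda,\Lambda$. Since $r$ is a fixed number depending only on $\lambda,\Lambda,\theta$, this yields $[p]_{C^\gamma(B_r(z_0))}\leq C\osc_{S_1}u$.

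\textbf{Globalizing.} Take $z,z'\in\mathcal P(S_\theta)$.
\begin{enumerate}[(i)]
\item If $|z-z'|<r$, then $z'\in B_r(z)$, and the local seminorm bound gives $|p(z)-p(z')|\leq C\osc_{S_1}u\,|z-z'|^\gamma$.
\item If $|z-z'|\geq r$, then
\[
|p(z)-p(z')|\leq 2\|p\|_{L^\infty}\leq 2C\osc_{S_1}u\; r^{-\gamma}|z-z'|^\gamma .
\]
\end{enumerate}
Adding the $L^\infty$ bound proves the proposition. The diameter bound from Lemma~\ref{lem:P-interior-control} is not actually needed in this step.

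\textbf{Main point to check.} Only uniformity is delicate, and Lemma~\ref{lem:P-interior-control} already supplies it. The transformed domain $\mathcal P(S_\theta)$ need not be convex, but the two-case argument above avoids any chaining along paths.
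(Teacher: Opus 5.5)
Your proposal is correct and follows the paper's proof essentially step for step. Like the paper, you apply Proposition~\ref{prop:p-bounded} at level $\frac{1+\theta}{2}$, take a uniform radius from the separation in Lemma~\ref{lem:P-interior-control} so that $B_{8r}(z)$ stays inside, apply Corollary~\ref{cor:Bauman-quotient} with $w=ap$, and globalize with the same two-case argument on $|z-z'|$.
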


\begin{proof}
Set $\theta_1=\frac{1+\theta}{2}$ and $Q_h:=\mathcal P(S_h)$.
Lemma~\ref{lem:P-interior-control} gives a quantitative separation between
$Q_\theta$ and $\partial Q_{\theta_1}$. Proposition~\ref{prop:p-bounded},
applied at the level $\theta_1$, gives
\[
\|p\|_{L^\infty(Q_{\theta_1})}
\leq C\osc_{S_1}u.
\]

By Lemma~\ref{lem:P-interior-control}, there exists $r_\ast>0$, depending
only on $\lambda,\Lambda$ and $\theta$, such that
\[
B_{8r_\ast}(z)\subset Q_{\theta_1}
\qquad
\text{for every }z\in Q_\theta.
\]
For each $z\in Q_\theta$, apply Corollary~\ref{cor:Bauman-quotient} with
$w=ap$ in $B_{8r_\ast}(z)$. Since
\[
\|p\|_{L^\infty(B_{2r_\ast}(z))}
\leq
\|p\|_{L^\infty(Q_{\theta_1})}
\leq C\osc_{S_1}u,
\]
we obtain
\[
\|p\|_{L^\infty(B_{r_\ast}(z))}
+
r_\ast^\gamma[p]_{C^\gamma(B_{r_\ast}(z))}
\leq
C\osc_{S_1}u.
\]

We now pass from the local estimate to $Q_\theta$. If
$z_1,z_2\in Q_\theta$ and $|z_1-z_2|<r_\ast$, then the preceding estimate
applied at $z_1$ gives
\[
|p(z_1)-p(z_2)|
\leq
C\osc_{S_1}u\, |z_1-z_2|^\gamma.
\]
If $|z_1-z_2|\geq r_\ast$, then
\[
|p(z_1)-p(z_2)|
\leq
2\|p\|_{L^\infty(Q_{\theta_1})}
\leq
C\osc_{S_1}u
\leq
C r_\ast^{-\gamma}\osc_{S_1}u\, |z_1-z_2|^\gamma.
\]
Since $r_\ast$ is quantitatively controlled, this gives
\[
\|p\|_{C^\gamma(Q_\theta)}
\leq
C\osc_{S_1}u.
\]
\end{proof}

\begin{proof}[Proof of Theorem \ref{thm:main}]
We first estimate $u_1$. Proposition~\ref{prop:p-holder} gives
\[
\|p\|_{C^\gamma(\mathcal P(S_\theta))}\leq C\osc_{S_1}u.
\]
By Lemma~\ref{lem:Caffarelli-C1beta}, the map
$\mathcal P(x)=(\varphi_1(x),x_2)$ is $C^\beta$ on $S_\theta$, with
\[
|\mathcal P(x)-\mathcal P(y)|\leq C|x-y|^\beta
\qquad\text{for }x,y\in S_\theta.
\]
Since $u_1=p\circ\mathcal P$, we obtain
\begin{equation}\label{eq:u1-holder-final}
\|u_1\|_{C^{\beta\gamma}(S_\theta)}\leq C\osc_{S_1}u.
\end{equation}
The constants depend only on $\lambda,\Lambda$ and $\theta$.

We apply the same argument in the second variable. Write
$(\xi,\eta)=(x_1,\varphi_2(x_1,x_2))$,
let $\widehat{\mathcal Q} $ be the inverse map, and set
\[
\widehat a=(\varphi_{22}\circ\widehat{\mathcal Q})^{-1},
\qquad
\widehat p=u_2\circ\widehat{\mathcal Q},
\qquad
\widehat g=\det D^2\varphi\circ\widehat{\mathcal Q}.
\]
The transformed operator is
$\partial_{\xi\xi}+\widehat g\partial_{\eta\eta}$, and
$\widehat a$ and $\widehat a\widehat p$ solve its adjoint equation. The mass
identity and all preceding estimates are unchanged after interchanging the two
coordinates. Hence
\begin{equation}\label{eq:u2-holder-final}
\|u_2\|_{C^{\beta\gamma}(S_\theta)}\leq C\osc_{S_1}u.
\end{equation}
Combining \eqref{eq:u1-holder-final} and \eqref{eq:u2-holder-final} proves
\eqref{eq:main-estimate-intro} with $\alpha=\beta\gamma$.
\end{proof}
\section{Proof of Theorem~\ref{thm:Liouville}}\label{sec:liouville}

\begin{proof}[Proof of Theorem~\ref{thm:Liouville}]
After subtracting the tangent plane of $\varphi$ at the origin, we may assume
\[
\varphi(0)=0,
\qquad
\nabla\varphi(0)=0,
\]
and write
\[
S_h:=S_\varphi(0,h)=\{x\in\R^2:\varphi(x)<h\}.
\]
Every $S_h$ is bounded. Indeed, suppose that $\overline{S_h}$ were unbounded.
Choose $x_k\in\overline{S_h}$ with $|x_k|\to\infty$ and, after passing to a
subsequence, assume $\frac{x_k}{|x_k|}\to v\in\mathbb S^1$. Since
$0\in\overline{S_h}$ and $\overline{S_h}$ is convex, for every fixed $t>0$ one
has $\frac{t}{|x_k|}x_k\in\overline{S_h}$
for all sufficiently large $k$. Passing to the limit gives
$tv\in\overline{S_h}$. Thus $t\mapsto\varphi(tv)$ is convex, nonnegative,
has right derivative zero at $t=0$, and is bounded above by $h$ on
$[0,\infty)$. Its derivative must therefore vanish identically, contradicting
strict convexity. The sections are increasing, and they exhaust $\R^2$ because
$\varphi$ is finite everywhere.

For each $h>0$, let $\mathcal A_hx=A_hx+c_h$
be an orientation-preserving affine normalization of $S_h$. By the standard
normalization and volume estimates for Monge--Amp\`ere sections, see for
example \cite{GH,GuBook,LeBook}, we may arrange that, with
$\widetilde x_h:=\mathcal A_h(0)$,
\begin{equation}\label{eq:large-section-normalization}
B_{c_0}(\widetilde x_h)
\subset \mathcal A_hS_h
\subset B_{C_0}(\widetilde x_h),
\qquad
\det A_h=h^{-1},
\end{equation}
where $0<c_0<C_0<\infty$ depend only on $\lambda$ and $\Lambda$.
Starting from a standard normalization, the determinant can be fixed exactly
by multiplying the linear part by a scalar bounded above and below by
structural constants, since $|S_h|\simeq h$ in dimension two.
We claim that
\begin{equation}\label{eq:Ah-goes-zero}
\|A_h\|\to0
\qquad\text{as }h\to\infty.
\end{equation}
Fix $R>0$. Since $S_h$ exhausts $\R^2$, one has $B_R\subset S_h$ for all
sufficiently large $h$. For every unit vector $v$, both $0$ and $Rv$ then
belong to $S_h$, and hence
\[
R|A_hv|
=
|\mathcal A_h(Rv)-\mathcal A_h(0)|
\leq
\diam(\mathcal A_hS_h)
\leq2C_0.
\]
Thus
$\|A_h\|\leq\frac{2C_0}{R}$ eventually. Since $R$ is arbitrary,
\eqref{eq:Ah-goes-zero} follows.

Define
\[
\widetilde\varphi_h(y):=\frac1h\varphi(\mathcal A_h^{-1}y),
\qquad
\widetilde u_h(y):=u(\mathcal A_h^{-1}y).
\]
Then
\[
\widetilde\varphi_h(\widetilde x_h)=0,
\qquad
\nabla\widetilde\varphi_h(\widetilde x_h)=0,
\]
and
\[
S_{\widetilde\varphi_h}(\widetilde x_h,1)=\mathcal A_hS_h,
\qquad
S_{\widetilde\varphi_h}(\widetilde x_h,1/2)=\mathcal A_hS_{h/2}.
\]
Moreover, because $\det A_h=h^{-1}$,
\[
\det D_y^2\widetilde\varphi_h(y)
=
\det D_x^2\varphi(\mathcal A_h^{-1}y),
\]
and affine covariance gives
\[
L_{\widetilde\varphi_h}\widetilde u_h=0
\qquad\text{in }\mathcal A_hS_h.
\]
After translating the $y$-variables by $-\widetilde x_h$, the section in
\eqref{eq:large-section-normalization} is normalized. Fix the exponent
$\alpha_0\in(0,1)$ supplied by Theorem~\ref{thm:main} at the relative scale
$1/2$. Applying that theorem to $\widetilde u_h$ yields
\begin{equation}\label{eq:affine-gradient-liouville}
\left|
A_h^{-T}\bigl(\nabla u(x)-\nabla u(x')\bigr)
\right|
\leq
C\,\osc_{S_h}u\cdot\,|A_h(x-x')|^{\alpha_0}
\end{equation}
for every $x,x'\in S_{h/2}$, with $C$ independent of $h$.
Here $A_h^{-T}$ denotes the transpose of $A_h^{-1}$.

For each $h$, choose a unit right singular vector $e_h$ of $A_h$ associated
with its smallest singular value. Thus
\begin{equation}\label{eq:min-singular-direction}
|A_he_h|=\sigma_{\min}(A_h)=\|A_h^{-1}\|^{-1}.
\end{equation}
Using
\[
\left(\nabla u(x)-\nabla u(x')\bigr)\cdot e_h=\bigl(A_h^{-T}\bigl(\nabla u(x)-\nabla u(x')\bigr)\right)\cdot(A_he_h),
\]
together with \eqref{eq:affine-gradient-liouville} and
\eqref{eq:min-singular-direction}, we obtain
\begin{equation}\label{eq:directional-gradient-liouville}
\left|\bigl(\nabla u(x)-\nabla u(x')\bigr)\cdot e_h\right|
\leq
C\frac{\osc_{S_h}u}{\|A_h^{-1}\|}\cdot
\|A_h\|^{\alpha_0}\cdot|x-x'|^{\alpha_0}.
\end{equation}
The translation part of $\mathcal A_h$ cancels when distances are taken, and
\eqref{eq:large-section-normalization} implies
\[
\diam S_h
\leq C\|A_h^{-1}\|.
\]
Since $0\in S_h$, the linear growth assumption therefore yields
\begin{equation*}
\osc_{S_h}u
\leq C\bigl(1+\|A_h^{-1}\|\bigr).
\end{equation*}
By \eqref{eq:Ah-goes-zero}, we have
$\|A_h^{-1}\|\to\infty$, and 
\begin{equation}\label{eq:osc-over-long-axis}
\frac{\osc_{S_h}u}{\|A_h^{-1}\|}\leq C
\end{equation}
for all sufficiently large $h$.

Choose a sequence $h_k\to\infty$. After passing to a subsequence, compactness
of the unit circle gives $e_{h_k}\to e_\infty\in\mathbb S^1$.
Fix arbitrary $x,x'\in\R^2$. For all sufficiently large $k$, one has
$x,x'\in S_{\frac{h_k}{2}}$. Combining
\eqref{eq:directional-gradient-liouville},
\eqref{eq:osc-over-long-axis}, and \eqref{eq:Ah-goes-zero}, and then letting
$k\to\infty$, gives
\[
\bigl(\nabla u(x)-\nabla u(x')\bigr)\cdot e_\infty=0.
\]
Hence $e_\infty\cdot\nabla u$ is constant in $\R^2$. Therefore
\begin{equation}\label{eq:hessian-kernel-direction}
D^2u\,e_\infty=0
\qquad\text{in }\R^2.
\end{equation}
Let $e_\infty^\perp$ be a unit vector orthogonal to $e_\infty$. Since $D^2u$
is a symmetric $2\times2$ matrix, \eqref{eq:hessian-kernel-direction} implies
that, pointwise,
\[
D^2u=\mu\,e_\infty^\perp\otimes e_\infty^\perp
\]
for a scalar function $\mu$. Substitution into the equation yields
\[
0=L_\varphi u
=\tr(\Phi D^2u)
=\mu\,(e_\infty^\perp)^T\Phi e_\infty^\perp.
\]
The cofactor matrix $\Phi$ is positive definite, so
$(e_\infty^\perp)^T\Phi e_\infty^\perp>0$. Consequently $\mu\equiv0$,
hence $D^2u\equiv0$ and $u$ is affine.
\end{proof}


\begin{thebibliography}{9999}

\bibitem[Bau84]{Ba}
P.~Bauman,
Positive solutions of elliptic equations in nondivergence form and their adjoints,
\emph{Ark. Mat.} \textbf{22} (1984), no.~2, 153--173,
\url{https://doi.org/10.1007/BF02384378}.

\bibitem[Caf90]{CaLoc}
L.~A. Caffarelli,
A localization property of viscosity solutions to the Monge--Amp\`ere equation and their strict convexity,
\emph{Ann.\ of Math. (2)} \textbf{131} (1990), no.~1, 129--134,
\url{https://doi.org/10.2307/1971509}.

\bibitem[Caf91]{Ca}
L.~A. Caffarelli,
Some regularity properties of solutions of Monge--Amp\`ere equation,
\emph{Comm.\ Pure Appl.\ Math.} \textbf{44} (1991), no.~8--9, 965--969,
\url{https://doi.org/10.1002/cpa.3160440809}.

\bibitem[CG96]{CaGuRA}
L.~A. Caffarelli and C.~E. Guti\'errez,
Real analysis related to the Monge--Amp\`ere equation,
\emph{Trans.\ Amer.\ Math.\ Soc.} \textbf{348} (1996), no.~3, 1075--1092,
\url{https://doi.org/10.1090/S0002-9947-96-01473-0}.

\bibitem[CG97]{CG}
L.~A. Caffarelli and C.~E. Guti\'errez,
Properties of the solutions of the linearized Monge--Amp\`ere equation,
\emph{Amer.\ J. Math.} \textbf{119} (1997), no.~2, 423--465,
\url{https://doi.org/10.1353/ajm.1997.0010}.

\bibitem[Cui26]{Cui26}
G.~Cui,
Gradient potential estimates for linearized Monge--Amp\`ere equations,
\emph{arXiv preprint} arXiv:2606.28910 (2026),
\url{https://doi.org/10.48550/arXiv.2606.28910}.

\bibitem[Fig17]{FiBook}
A.~Figalli,
\emph{The Monge--Amp\`ere equation and its applications},
Zurich Lectures in Advanced Mathematics,
European Mathematical Society, Z\"urich, 2017,
\url{https://doi.org/10.4171/170}.


\bibitem[Gut16]{GuBook}
C.~E. Guti\'errez,
\emph{The Monge--Amp\`ere equation},
2nd ed.,
Progress in Nonlinear Differential Equations and Their Applications, vol.~89,
Birkh\"auser, Cham, 2016,
\url{https://doi.org/10.1007/978-3-319-43374-5}.

\bibitem[GH00]{GH}
C.~E. Guti\'errez and Q.~Huang,
Geometric properties of the sections of solutions to the Monge--Amp\`ere equation,
\emph{Trans.\ Amer.\ Math.\ Soc.} \textbf{352} (2000), no.~9, 4381--4396,
\url{https://doi.org/10.1090/S0002-9947-00-02491-0}.

\bibitem[GN11]{GN1}
C.~E. Guti\'errez and T.~Nguyen,
Interior gradient estimates for solutions to the linearized Monge--Amp\`ere equation,
\emph{Adv.\ Math.} \textbf{228} (2011), no.~4, 2034--2070,
\url{https://doi.org/10.1016/j.aim.2011.06.035}.

\bibitem[GN15]{GN2}
C.~E. Guti\'errez and T.~Nguyen,
Interior second derivative estimates for solutions to the linearized Monge--Amp\`ere equation,
\emph{Trans.\ Amer.\ Math.\ Soc.} \textbf{367} (2015), no.~7, 4537--4568,
\url{https://doi.org/10.1090/S0002-9947-2015-06048-6}.

\bibitem[GT06]{GuTo}
C.~E. Guti\'errez and F.~Tournier,
$W^{2,p}$-estimates for the linearized Monge--Amp\`ere equation,
\emph{Trans.\ Amer.\ Math.\ Soc.} \textbf{358} (2006), no.~11, 4843--4872,
\url{https://doi.org/10.1090/S0002-9947-06-04189-4}.

\bibitem[Hei59]{He}
E.~Heinz,
On elliptic Monge--Amp\`ere equations and Weyl's embedding problem,
\emph{J. Analyse Math.} \textbf{7} (1959), 1--52,
\url{https://doi.org/10.1007/BF02787679}.

\bibitem[KLWZ26]{KLWZ}
Y.~H. Kim, N.~Q. Le, L.~Wang, and B.~Zhou,
Singular Abreu equations and linearized Monge--Amp\`ere equations with drifts,
\emph{J. Eur.\ Math.\ Soc.} \textbf{28} (2026), no.~9, 4105--4148,
\url{https://doi.org/10.4171/JEMS/1548}.

\bibitem[Le18]{LeSem}
N.~Q. Le,
H\"older regularity of the 2D dual semigeostrophic equations via analysis of linearized Monge--Amp\`ere equations,
\emph{Comm.\ Math.\ Phys.} \textbf{360} (2018), no.~1, 271--305,
\url{https://doi.org/10.1007/s00220-018-3125-9}.

\bibitem[Le24]{LeBook}
N.~Q. Le,
\emph{Analysis of Monge--Amp\`ere equations},
Graduate Studies in Mathematics, vol.~240,
American Mathematical Society, Providence, RI, 2024,
\url{https://doi.org/10.1090/gsm/240}.

\bibitem[LN14]{LN1}
N.~Q. Le and T.~Nguyen,
Global $W^{2,p}$ estimates for solutions to the linearized Monge--Amp\`ere equations,
\emph{Math.\ Ann.} \textbf{358} (2014), no.~3--4, 629--700,
\url{https://doi.org/10.1007/s00208-013-0974-6}.

\bibitem[LN17]{LN2}
N.~Q. Le and T.~Nguyen,
Global $W^{1,p}$ estimates for solutions to the linearized Monge--Amp\`ere equations,
\emph{J. Geom.\ Anal.} \textbf{27} (2017), no.~3, 1751--1788,
\url{https://doi.org/10.1007/s12220-016-9739-2}.

\bibitem[LS13]{LS}
N.~Q. Le and O.~Savin,
Boundary regularity for solutions to the linearized Monge--Amp\`ere equations,
\emph{Arch.\ Ration.\ Mech.\ Anal.} \textbf{210} (2013), no.~3, 813--836,
\url{https://doi.org/10.1007/s00205-013-0653-5}.

\bibitem[Liu21]{Liu}
J.~Liu,
Interior $C^2$ estimate for Monge--Amp\`ere equation in dimension two,
\emph{Proc.\ Amer.\ Math.\ Soc.} \textbf{149} (2021), no.~6, 2479--2486,
\url{https://doi.org/10.1090/proc/15459}.

\bibitem[Mor38]{Mor}
C.~B. Morrey, Jr.,
On the solutions of quasi-linear elliptic partial differential equations,
\emph{Trans.\ Amer.\ Math.\ Soc.} \textbf{43} (1938), no.~1, 126--166,
\url{https://doi.org/10.1090/S0002-9947-1938-1501936-8}.

\bibitem[Nir53]{Ni}
L.~Nirenberg,
On nonlinear elliptic partial differential equations and H\"older continuity,
\emph{Comm.\ Pure Appl.\ Math.} \textbf{6} (1953), no.~1, 103--156,
\url{https://doi.org/10.1002/cpa.3160060105}.

\bibitem[Saf88]{Saf}
M.~V. Safonov,
Unimprovability of estimates of H\"older constants for solutions of linear
elliptic equations with measurable coefficients,
\emph{Math. USSR-Sb.} \textbf{60} (1988), no.~1, 269--281,
\url{https://doi.org/10.1070/SM1988v060n01ABEH003167}.

\bibitem[Sav10]{Sa}
O.~Savin,
A Liouville theorem for solutions to the linearized Monge--Amp\`ere equation,
\emph{Discrete Contin.\ Dyn.\ Syst.} \textbf{28} (2010), no.~3, 865--873,
\url{https://doi.org/10.3934/dcds.2010.28.865}.

\bibitem[TZ20]{TZ}
L.~Tang and Q.~Zhang,
Interior $C^{1,\alpha}$ regularity for the linearized Monge--Amp\`ere equation with VMO type coefficients,
\emph{Adv.\ Oper.\ Theory} \textbf{5} (2020), no.~1, 204--218,
\url{https://doi.org/10.1007/s43036-019-00012-1}.

\bibitem[TW08a]{TW}
N.~S. Trudinger and X.-J. Wang,
The Monge--Amp\`ere equation and its geometric applications,
in \emph{Handbook of Geometric Analysis}, No.~1,
Advanced Lectures in Mathematics (ALM), vol.~7,
International Press, Somerville, MA, 2008, pp.~467--524.

\bibitem[TW08b]{TrWa}
N.~S. Trudinger and X.-J. Wang,
Boundary regularity for the Monge--Amp\`ere and affine maximal surface equations,
\emph{Ann.\ of Math. (2)} \textbf{167} (2008), no.~3, 993--1028,
\url{https://doi.org/10.4007/annals.2008.167.993}.

\bibitem[Wan25]{Wa}
L.~Wang,
Interior H\"older regularity of the linearized Monge--Amp\`ere equation,
\emph{Calc.\ Var.\ Partial Differential Equations} \textbf{64} (2025),
no.~1, Paper No.~17,
\url{https://doi.org/10.1007/s00526-024-02885-4}.

\end{thebibliography}
\end{document}